\newtheorem{theorem}{Theorem}[section]
\newtheorem{proposition}[theorem]{Proposition}
\newtheorem{corollary}[theorem]{Corollary}
\newtheorem{introtheorem}{Theorem}
\theoremstyle{definition}
\newtheorem{definition}[theorem]{Definition}
\newtheorem{example}[theorem]{Example}
\theoremstyle{remark}
\newtheorem{remark}[theorem]{Remark}
\newtheorem{philosophy}[theorem]{Philosophy}
\DeclareMathOperator{\Sym}{Sym}
\DeclareMathOperator{\reg}{reg}
\newcommand{\rank}{\operatorname{rank}}
\newcommand{\Tor}{\operatorname{Tor}}
\newcommand{\gr}{\operatorname{gr}}
\newcommand{\grm}{\operatorname{gr_\frakm}}
\newcommand{\initial}{\operatorname{in}}
\newcommand{\Poin}{\operatorname{Poin}}
\newcommand{\grmR}{{\grm(R)}}
\newcommand{\bbN}{\mathbbm{N}}
\newcommand{\bbP}{\mathbbm{P}}
\newcommand{\bbZ}{\mathbbm{Z}}
\newcommand{\bbk}{\mathbbm{k}}
\newcommand{\calO}{{\mathcal{O}}}
\newcommand{\calS}{{\mathcal{S}}}
\newcommand{\frakm}{{\mathfrak{m}}}
\newcommand{\bfF}{{\mathbf{F}}}
\newcommand{\bfG}{{\mathbf{G}}}
\newcommand{\bfw}{{\mathbf{w}}}
\newcommand{\bfx}{{\mathbf{x}}}
\newcommand{\bfy}{{\mathbf{y}}}
\newcommand{\ideal}[1]{{\langle #1 \rangle}}
\newcommand{\set}[1]{{\{ #1 \}}}
\title{Rational Normal Curves in Weighted Projective Space}
\subjclass{13D02}
\author{Caitlin M. Davis}
\address{University of Wisconsin-Madison, 480 Lincoln Drive, Madison, WI 53706}
\email{cmdavis22@wisc.edu}
\author{Aleksandra Sobieska}
\address{Marshall University, 
One John Marshall Drive, 
Huntington, WV 25755}
\email{sobieskasnyd@marshall.edu}
\begin{document}

\maketitle

\begin{abstract}
This article aims to extend classical homological results about the rational normal curves to analogues in weighted projective spaces.  Results include determinantality and nonstandard versions of quadratic generation and the Koszul property.
\end{abstract}

\section{Introduction}

This article aims to extend classical homological results about the rational normal curves to their weighted analogues.  The rational normal curves provide a rich source of examples of many homological properties. In particular, the defining ideals of these curves are determinantal (\cite[Proposition~6.1]{eGeomOSyz}) with a quadratic Gr\"obner basis (\cite{sGrobnerBasesConvexPolytopes, eInitialIdealsVeroneseSubrings}, \cite[Theorem~3.1.1]{cdrKoszulAlgRegularity}), and their coordinate rings are Cohen-Macaulay and Koszul.  We consider certain analogues of the rational normal curves in weighted projective spaces, and we show that (weighted variants of) all of these properties hold for these curves.

There has been great interest in recent years in extending homological properties to the nonstandard graded and multigraded settings, where we use nonstandard graded to refer to rings with a nonstandard $\bbZ$-grading. Since the generalization by Maclagan-Smith of Castelnuovo-Mumford regularity to multigraded polynomial rings (\cite{msMultigradedCMReg}), there have been many results on multigraded regularity: \cite{bcCMRegMultigradedIdeals,bhsCharacterizingMultigradedReg,bhsBoundsMultigradedReg,chmMultigradedTor,cnMultigradedRegCI,svwMultigradedRegCoarsenings}. There has also been much work on multigraded syzygies: \cite{beLinSyzOCurvesIWProjSp,bePositivityNonstdBetti,beTateRes,besVirtualRes,bklyHomologicalCombinatorial,bsShortResDiagonal,bcnMultigradedSylvesterForms,cSyzygiesCurvesProductsProjSpaces,eesTateResProductsProjSpace,hnvVirtualResPoints,hsMultigradedHilbertSchemes,hssSyzygiesMultigradedReg,svMultigradedRegSyzygiesFatPoints,yVirtualResMonomialIdeals}. In particular, the generalization of the rational normal curve studied in this article comes from Brown and Erman in \cite{beLinSyzOCurvesIWProjSp}. 

Despite this interest in the multigraded setting, few geometric examples have been examined thoroughly.  Existing work concerning geometric examples focuses primarily on sets of points: \cite{hnvVirtualResPoints,svMultigradedRegSyzygiesFatPoints}.  Part of what makes the rational normal curves so important in the literature is that they can be understood from so many different perspectives. They provide an important class of examples for understanding geometric, algebraic, and homological topics, and they are useful
for testing out new definitions and conjectures. Our aim is to develop an analogous class of examples that could play a similar role for the nonstandard graded setting as that theory continues to expand.  With this in mind, we will not focus on the most general possible class of analogues, but rather work to build a family that can be as fully understood as the classical rational normal curves. To be specific:  the Brown-Erman construction embeds a smooth curve based on additional data $(L,D,e)$ where $L$ is a line bundle, $D$ is a base locus divisor, and $e$ is a positive integer. In this article, we study closely the two-parameter family of rational curves where $L=\calO(d)$ varies over all line bundles and $e$ varies over all positive integers, but where $D$ is always simply the point at infinity. The following example illustrates some of the properties which these curves share with the usual rational normal curves.

\begin{example}
\label{ex:d3e2}
    When $d=3$ and $e=2$, the weighted rational curve (see Definition \ref{def:WeightedRationalCurve}) is obtained by the map $\psi: \mathbb P^1 \hookrightarrow \mathbb P(1,1,1,2,2)$ given by $[s:t] \mapsto [s^3:s^2t:st^2:st^5:t^6]$.  Note that the coordinate ring $R \cong \bbk[s^3,s^2t,st^2,st^5,t^6]$ is not normal; for example, $\frac{st^5}{st^2} = t^3$ is integral, but is not in $R$.
    
    However, this curve shares many other properties with the usual rational normal curve.  For instance: $R$ is Cohen-Macaulay; the defining ideal $I$ is generated by the $2\times 2$ minors of
        \[
        \begin{bmatrix} x_0 & x_1 & x_2^2 & y_0 \\ x_1 & x_2 & y_0 & y_1 \end{bmatrix};
        \]
    these generators are a Gr\"obner basis for $I$ with respect to some monomial order (see Section \ref{sec:GB}); and $R$ is resolved by an Eagon-Northcott complex.  
\end{example}

In fact, the observations made in the example above hold for all weighted rational curves.

\begin{introtheorem}
\label{thm:intro}
    Let $R = S/I$ be the coordinate ring of the weighted rational curve of type $d,e$.  Then we have the following:
    \begin{enumerate}
        \item $R$ is Cohen-Macaulay (Proposition \ref{prop:CohenMacaulay})
        \item $I$ is determinantal in the following sense:  $I$ is generated by the $2\times2$ minors of a $2 \times (d+e)$ matrix, almost all of whose entries are linear  (See Theorem \ref{thm:determinantal} for a more precise version.),
        \item $R$ is resolved by the Eagon-Northcott complex (Corollary \ref{cor:eagonnorthcott}), and
        \item $I$ has an initial ideal of quadratic monomials (Theorem \ref{thm:GBforI}). 
    \end{enumerate}
\end{introtheorem}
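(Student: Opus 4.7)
The four parts are naturally organized around the determinantal description (2); parts (1) and (3) follow from (2) via standard homological machinery for Eagon-Northcott complexes, while part (4) is an independent Gr\"obner computation. I would therefore establish (2) first, deduce (1) and (3) from it, and treat (4) on its own.

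For (2), guided by Example~\ref{ex:d3e2}, I would construct an explicit matrix $M$ whose columns are $\binom{x_i}{x_{i+1}}$ for $0 \leq i \leq d-2$, a transition column of the form $\binom{x_{d-1}^2}{y_0}$, and columns $\binom{y_j}{y_{j+1}}$ for $0 \leq j \leq e-2$. Each $2 \times 2$ minor is homogeneous in the weighted $\bbZ$-grading of $S$, and the inclusion $I_2(M) \subseteq I$ is verified by direct substitution into the parameterization $x_i \mapsto s^{d-i}t^{i}$, $y_j \mapsto s^{e-1-j}t^{2d-e+1+j}$ (read off from the example). For the reverse containment, since $I$ is a prime of height $d+e-2$ in the polynomial ring $S$ on $d+e$ variables, it suffices to show $\mathrm{ht}\,I_2(M) \geq d+e-2$; this is precisely the expected generic codimension of the $2 \times 2$ minors of a matrix of this shape, and can be confirmed by a direct rank or inductive argument. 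Parts (3) and (1) now come for free: the Eagon-Northcott complex on $M$ has length equal to the expected codimension $d+e-2$, and since this matches the actual codimension, the complex is exact and resolves $R = S/I$, proving (3). By Auslander-Buchsbaum then $\depth R = \dim S - (d+e-2) = 2 = \dim R$, proving Cohen-Macaulayness in (1). The only minor subtlety is the nonlinear entry $x_{d-1}^2$ in the transition column, but Eagon-Northcott does not require linearity; one merely loses strand-linearity of the resolution.

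For (4), I would specify a lex monomial order --- a natural candidate is one that orients each $2 \times 2$ minor $a_i b_j - a_j b_i$ (from columns $i < j$ of $M$) with leading term $a_i b_j$. Minors entirely within the $x$-block or the $y$-block then produce manifestly quadratic leading monomials such as $x_i x_{j+1}$ or $y_i y_{j+1}$. Minors crossing the transition column produce longer binomials, e.g.\ $x_i y_0 - x_{i+1} x_{d-1}^2$ or $x_{d-1}^2 y_{j+1} - y_0 y_j$, whose leading term under the right order is the quadratic monomial ($x_i y_0$ in the first, $y_0 y_j$ in the second); the nonlinear tail is irrelevant to the initial ideal. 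I would then verify that these quadratic leading monomials generate $\init(I)$ either by Buchberger's criterion on all $S$-pairs, or more conceptually by computing $\HF(R)$ directly from the semigroup $R \subseteq \bbk[s,t]$ and matching it against the count of standard monomials for the proposed initial ideal. The principal obstacle will be the careful tuning of the order across the $x$/$y$ transition so that every leading term remains genuinely quadratic, and the combinatorial bookkeeping needed to verify the Hilbert function equality.
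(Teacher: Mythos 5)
Your overall architecture matches the paper's: establish determinantality first, deduce Cohen--Macaulayness and the Eagon--Northcott resolution from it, and treat the Gr\"obner basis as a separate computation. Two specific points, however, need attention.

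First, a small but consequential error in the setup: you generalized the transition entry from the $e=2$ example as $x_{d-1}^2$, with parameterization $y_j \mapsto s^{e-1-j}t^{2d-e+1+j}$. The correct generalization is $x_{d-1}^e$ in the transition column and $y_j \mapsto s^{e-1-j}t^{ed-(e-1)+j}$. For $e=2$ these agree, but for $e\geq 3$ your minors crossing the transition column (e.g.\ $x_i y_0 - x_{d-1}^e x_{i+1}$) will not lie in $I$ with your exponent, so $I_2(M)\subseteq I$ would fail.

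Second, and more seriously, the argument that $\operatorname{ht} I_2(M) = \operatorname{ht} I = d+e-2$ together with $I_2(M)\subseteq I$ yields $I_2(M)=I$ has a gap. Equal height plus containment gives only that $I$ is a minimal prime of $I_2(M)$; it does not rule out that $I_2(M)$ is a proper $I$-primary ideal, or an intersection of $I$ with other height-$(d+e-2)$ primes. The expected-codimension theorem does give that $S/I_2(M)$ is Cohen--Macaulay (hence unmixed), but unmixedness alone is not primality. In the classical case ($e=1$) one closes this gap because the $2\times d$ matrix is 1-generic, so $I_2(M)$ is prime by Eisenbud's theorem, and two primes of the same height with a containment must coincide. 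Here, the nonlinear entry $x_{d-1}^e$ destroys 1-genericity (the associated bilinear pencil has a reducible element), so that route is unavailable. The paper instead compares Hilbert series: $\operatorname{HS}_{S/I_2}$ is computed from the Eagon--Northcott complex (passing through the 1-generic matrix $M_v$ with $v$ in place of $x_{d-1}^e$, then specializing), while $\operatorname{HS}_R$ is computed from the semigroup description, and a nontrivial binomial-identity computation shows they agree. You would need some version of this comparison --- or at least a degree/Hilbert-polynomial argument exploiting CM-ness of $S/I_2(M)$ to rule out a nontrivial kernel of $S/I_2(M)\twoheadrightarrow R$ --- to complete (2).

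For (4), your proposed lex order differs from the paper's weight-then-revlex order (which picks anti-diagonal leading terms $x_{i+1}x_j$ in the $x$-block, whereas your lex picks diagonals $x_i x_{j+1}$). Both are plausible, and your proposed fallback --- matching the count of standard monomials against the semigroup Hilbert function of $R$ --- is precisely the verification the paper carries out; that step is where the real work of (4) lies, and it must actually be done rather than deferred.
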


Note that (2) and (4) are the key points.  Statement (3) follows from (2) and the observation that $I$ has the expected codimension for an ideal of minors.  Statement (1) was first proven in \cite{beLinSyzOCurvesIWProjSp} for curves of arbitrary genus, but our methods provide an alternate proof in genus 0 which more closely parallels the algebraic proofs for rational normal curves. Ongoing work of Banks--Ramkumar adds to these parallels by showing that these weighted rational curves are of minimal degree, just as in the classical setting (cf. \cite[p. 341]{eSyzygiesDegreesChoices}).

Theorem~\ref{thm:intro} shows how weighted rational curves broadly behave as rational normal curves do, yet it conceals the fact that the internal details are quite different.

\begin{example}\label{ex:d6e2}
    Consider the weighted rational curve with $d=6$ and $e=2$ in light of Theorem~\ref{thm:intro}.  This curve is given by $\psi: \mathbb P^1 \hookrightarrow \mathbb P(1,1,1,1,1,1,2,2)$ where
    \[
    [s:t] \mapsto [s^6:s^5t:\cdots:st^5:st^{11}:t^{12}].
    \]
    Theorem \ref{thm:intro}(3)  allows us to determine the Betti table of $S/I$ over $S$:
    \[\begin{matrix}
 & 0 & 1 & 2 & 3 & 4 & 5 & 6\\
0: & 1 & . & . & . & . & . & .\\
1: & . & 10 & 20 & 15 & 4 & . & .\\
2: & . & 10 & 40 & 60 & 40 & 10 & .\\
3: & . & 1 & 10 & 30 & 40 & 25 & 6
\end{matrix}\]
In the above we follow the formatting used in \texttt{Macaulay2}, so the entry in column $i$ and row $j$ is $\beta_{i,i+j}(R)$ and a dot indicates an entry is zero. This Betti table exhibits ``cancellation'' not found in those of the usual rational normal curves.  That is, syzygies of the same degree show up in multiple homological degrees.  Said another way, one could not predict this Betti table entirely from the Hilbert function, which is a contrast with what happens for rational normal curves.
\end{example}

Another important property of the usual rational normal curve is Koszulness.  The Koszul property, normally defined as $\reg_R(\bbk) = 0$, does not hold for any nonstandard graded ring, and so does not hold for the coordinate ring $R$ of a weighted rational curve. In fact, the differentials in the resolution of $k$ over $R$ contain entries which are not linear combinations of the variables.  In this sense, $R$ is even further from being classically Koszul than, say, a nonstandard graded polynomial ring.
 However, a closer examination of the resolution of $\bbk$ over $R$ reveals that $R$ is ``not too far'' from Koszul, and thus might have an analogous property.

\begin{example}
    When $d=1$ and $e=2$, the weighted rational curve is obtained by the map $\psi: \mathbb P^1 \hookrightarrow \mathbb P(1,2,2)$ given by $[s:t] \mapsto [s:st:t^2]$. The first few columns of the Betti table of $k$ over the coordinate ring $R = \bbk[s,st,t^2] = \bbk[x_0, y_0, y_1]/\ideal{y_0^2 - x_0^2y_1}$ are:
\[
\begin{matrix}
 & 0 & 1 & 2 & 3 & 4\\
0: & \mathbf{1} & 1 & . & . & .\\
1: & . & \mathbf{2} & 2 & . & .\\
2: & . & . & \mathbf{2} & 2 & .\\
3: & . & . & . & \mathbf{2} & 2\\
4: & . & . & . & . & \mathbf{2}
\end{matrix}
\]
In Corollary \ref{cor:bettitable}, we describe some of the Betti numbers of $\bbk$ over $R$.  In particular, we describe the linear strand and the ``maximally twisted'' strand (in bold in above example).  We can see immediately that $\reg_R(\bbk)$ is nonzero, so $R$ is not Koszul.  Nonetheless, the resolution of $\bbk$ over $R$ is in some sense ``not too far'' from linear: 
\[
R \xleftarrow{\begin{bmatrix} x & y_0 & y_1 \end{bmatrix}} R(-1)\oplus R(-3)^2 \xleftarrow{\begin{bmatrix} 0 & 0 & -y_0 & -y_1\\ -y_0 & -y_1 & x & 0\\ x^2 & y_0 & 0 & x \end{bmatrix}} R(-3)^2\oplus R(-4)^2 \gets \cdots 
\]
In Section \ref{sec:Koszulness}, we show that $R$ satisfies a generalized Koszul property, introduced by Herzog, Reiner, and Welker in \cite[Definition~5.1]{hrwTKoszulPropIAffSemigroupRings}.
\end{example}

\begin{introtheorem}
\label{thm:IntroKoszul}
    The coordinate ring $R$ of a weighted rational curve is generalized Koszul, in the sense of \cite{hrwTKoszulPropIAffSemigroupRings}.  That is, $\grmR$ is Koszul in the usual sense (Theorem \ref{thm:Koszul}).
\end{introtheorem}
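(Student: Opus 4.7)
The strategy is to upgrade the quadratic Gr\"obner basis of $I$ from Theorem~\ref{thm:intro}(4) to a quadratic Gr\"obner basis for the defining ideal of $\grmR = S/\init_\frakm(I)$. Koszulness then follows from the classical principle that a ring presented by an ideal with a quadratic monomial initial ideal is Koszul in the usual sense.

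The crux is a compatibility observation between the monomial order used in Theorem~\ref{thm:GBforI}, which I shall call $\tau$, and the standard grading on $S$ in which every variable has degree $1$. The minors of the matrix $M$ from Theorem~\ref{thm:intro}(2) are homogeneous in the weighted grading but typically have terms of differing standard degrees; for example, in Example~\ref{ex:d3e2} the minor $x_0 y_0 - x_1 x_2^2$ has a standard-quadratic term and a standard-cubic one. For $\init_\tau(I)$ to be a quadratic monomial ideal, the $\tau$-leading term of each such minor must be its quadratic term. This forces $\tau$ to refine the standard grading, in the sense that terms of lower standard degree are $\tau$-larger, a property I plan to verify directly from the construction of $\tau$ in Section~\ref{sec:GB}.

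Once refinement is established, a standard argument (see, e.g., Eisenbud, \emph{Commutative Algebra}, Theorem~15.17) yields $\init_\tau(I) = \init_\tau(\init_\frakm I)$: for any $f \in I$, the $\tau$-leading term of $f$ coincides with the $\tau$-leading term of its initial form $\init_\frakm(f)$, since both are the $\tau$-largest monomial among the terms of $f$ of minimum standard degree. Consequently $\init_\frakm(I)$ has the same quadratic monomial initial ideal under $\tau$ as $I$ does, and therefore $\grmR$ is Koszul.

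The main obstacle is the refinement verification. If the order $\tau$ from Theorem~\ref{thm:GBforI} does not by construction refine the standard grading, the contingency is to define a new order $\tau'$ that compares by standard degree first (lower being $\tau'$-larger) and by $\tau$ second, and then to re-verify via Buchberger's criterion that the same minors remain a Gr\"obner basis under $\tau'$. Since the quadratic monomial conclusion of Theorem~\ref{thm:GBforI} already forces the $\tau$-leading term of each minor to be of lowest standard degree, switching to $\tau'$ should not change any leading terms, and the verification should go through with only formal adjustments.
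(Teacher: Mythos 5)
Your approach is genuinely different from the paper's primary proof of Theorem~\ref{thm:Koszul}, which invokes the Herzog--Reiner--Welker criterion that a Cohen--Macaulay nonstandard-graded ring with minimal multiplicity has Koszul associated graded ring, and then verifies minimal multiplicity by computing the Hilbert series of $\grmR$ and $\dim_\bbk \frakm/\frakm^2$. Your strategy via a quadratic Gr\"obner basis is instead in the spirit of the paper's alternative proof (the Proposition following Theorem~\ref{thm:Koszul}), which shows that the minors of $M_0$ form a Gr\"obner basis for $I_0 = \init_\frakm(I)$ under $<_\bfw$.

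However, the central step of your plan has a genuine gap. The order $<_\bfw$ does \emph{not} refine the standard grading in the sense you want (lower standard degree is $<_\bfw$-larger). The weight vector $\bfw$ assigns weight $0$ to $x_{d-1}$, so the $\bfw$-weight of a monomial is its standard degree minus the exponent of $x_{d-1}$; these are incomparable. For example $x_{d-1}^2$ has standard degree $2$ and $\bfw$-weight $0$, while $x_0^3$ has standard degree $3$ and $\bfw$-weight $3$, so $x_0^3 >_\bfw x_{d-1}^2$ even though $x_0^3$ has strictly larger standard degree. More fundamentally, no \emph{term} order can globally declare ``lower standard degree is larger,'' since term orders must satisfy $1 < m$ for every nonconstant monomial $m$; your proposed $\tau'$ is a local order in Mora's sense, and Buchberger's criterion does not apply to it unmodified. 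Finally, the inference you draw in the second paragraph is logically backwards: the fact that the $\tau$-leading term of each \emph{generator} is its lowest-degree term does not force a refinement property of the order $\tau$, nor does it by itself imply that the leading term of an arbitrary $f\in I$ lies among the terms of $\init_\frakm(f)$.

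These obstacles can be overcome, and the paper's alternative proof shows how: one observes directly that the entries $x_{d-1}^e$ do not appear in any $<_\bfw$-initial term of a minor of $M$, so the initial terms of the minors of $M_0$ and of $M$ generate the same monomial ideal $J$; one then identifies $\init_\frakm(I)$ with $I_0$ (via the scroll $S'=S[v]/I_v$) and compares Hilbert series of $J$, of $\init_{<_\bfw}(I_0)$, and of $I_0$ using the Eagon--Northcott resolution. If you wish to pursue your route, the necessary fixes are: (1) replace your proposed $\tau'$ by the genuine term order that compares first by weighted degree, then by standard degree reversed, then by your tie-break, so that the restriction to each weighted-homogeneous component does what you want; (2) separately establish $\init_\frakm(I)=I_0$; and (3) close the Gr\"obner basis claim with a Hilbert series comparison rather than by re-running Buchberger.
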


Theorem \ref{thm:IntroKoszul} and Theorem \ref{thm:intro}(4) yield a positive answer to Question~7.9 in \cite{beLinSyzOCurvesIWProjSp} in the case of weighted rational curves. 

Taken together, Theorems \ref{thm:intro} and \ref{thm:IntroKoszul} echo the following theme, which is often seen in the nonstandard graded and multigraded settings.

\begin{philosophy}\label{phil:multigradedSyz}
In the study of nonstandard graded and multigraded syzygies, many classical results have multigraded analogues.   However, finding these analogues can often require subtle deformations of the classical statements, techniques, or definitions. Key examples of this phenomenon can be found in \cite{besVirtualRes}, \cite{msMultigradedCMReg}, and \cite{beTateRes}.
\end{philosophy}

More specifically, we see how things change: in Theorem \ref{thm:intro}(2), the linear matrix is replaced by one which is ``almost'' linear; in Theorem \ref{thm:intro}(4), the Gr\"obner basis is no longer quadratic in the usual sense, but the initial terms remain quadratic; in Theorem \ref{thm:IntroKoszul}, classical Koszulness of $R$ is replaced by Koszulness of the associated graded ring.  Since we will allow the case of weighted rational curves of type $d,1$ (that is, usual rational normal curves), it is also worth noting both that these changes are meaningful only when $e\neq 1$ and that the proofs included here do apply to the usual rational normal curves.

This paper is organized as follows: In \S \ref{sec:Background}, we define the main object of study, the weighted rational curves.  In \S \ref{sec:determinantal}, we prove that the defining ideal of each of these curves is determinantal.  In \S \ref{sec:GB}, we establish that the generators of these ideals are in fact Gr\"obner bases with respect to a certain monomial order.  In \S \ref{sec:Koszulness}, we consider a nonstandard version of the Koszul property and show that it holds for the weighted rational curves.

\subsection*{Acknowledgments} The authors would like to thank Daniel Erman and Maya Banks for their helpful conversation and valuable insights. Most, if not all, computations were performed with the aid of Macaulay2 \cite{M2}.

\section{Background}
\label{sec:Background}

As a way to understand nice embeddings into weighted projective space, Brown and Erman \cite{beLinSyzOCurvesIWProjSp} describe weighted analogues of linear series and of complete linear series.  Their analogue of a complete linear series is a ``log complete series'' and it depends on three pieces of data: a line bundle $L$, an effective divisor $D$, and a positive integer $e$.  We refer the reader to Section~3 of their paper for details.  As stated in the introduction, we will focus our attention on one specific case of their main definition in order to develop a deep understanding of a concrete class of examples in the nonstandard graded setting.  Here we describe this special case: the case where $D$ is the point at infinity, which will be the only case of interest throughout this paper.

We first recall a compact notation for certain weighted projective spaces.  We use exponents to indicate the number of weights of a particular degree.  For example, we write $\mathbb P(1^3,2^2)$ to denote the weighted projective space $\mathbb P(1,1,1,2,2)$ from Example \ref{ex:d3e2}.

\begin{definition}
\label{def:WeightedRationalCurve}
Let $Z=\bbP^1$, $D = [0:1]$, and $L = \mathcal O_{\mathbb P^1}(d)$ for some $d \ge 1$. The corresponding log complete series for a fixed $e \geq 1$ is
\[
W = \ideal{s^d,s^{d-1}t,\ldots,st^{d-1},s^{e-1}t^{de-(e-1)},\ldots st^{de-1},t^{de}}.
\]
This induces a closed immersion $\psi_W \colon \mathbb P^1 \hookrightarrow \mathbb P(1^d,e^e)$ given by 
\[\psi_W \colon [s:t] \mapsto [s^d:s^{d-1}t:\ldots:st^{d-1}:s^{e-1}t^{de-(e-1)}:\ldots:st^{de-1}:t^{de}].\] 

For $W$ and $\psi_W$ as above, we call the image $\psi_W(\bbP^1)$ the \textit{weighted rational curve} of type $(d,e)$.
\end{definition}

Note that in the case that $e=1$, this recovers the usual rational normal curve of degree $d$.

Define $S = \bbk[x_0,\ldots,x_{d-1},y_0,\ldots,y_{e-1}]$ with the weighted grading where $\deg(x_i) = 1$ and $\deg(y_i) = e$, so that it is the coordinate ring of $\bbP(1^d,e^e)$. The immersion $\psi_W$ induces a corresponding ring map $\varphi_W: S \rightarrow \bbk[s,t]$ given by $x_i \mapsto s^{d-i}t^i$ and $y_i \mapsto s^{e-1-i}t^{ed-(e-1)+i}$. 

We will refer to the coordinate ring of the weighted rational curve as $R$ and its defining ideal as $I = \ker \varphi_W$, so that $R = S/I$. While $R$ and $I$ typically depend on $W$, the choice of $W$ will be fixed throughout the paper, and so we omit this dependence from the notation.

\begin{example}
Let $d=3$ and $e=2$, as in Example \ref{ex:d3e2}. In this case, $W = \langle s^3, s^2t, st^2, st^5, t^6 \rangle$.
\end{example}

The following observations follow easily from \cite{beLinSyzOCurvesIWProjSp}. 

\begin{proposition}
\label{prop:notnormal}
\cite[Remark~3.15]{beLinSyzOCurvesIWProjSp}  The coordinate ring $R$ of a weighted rational curve of type $(d,e)$ is not integrally closed for $e\geq 2$.
\end{proposition}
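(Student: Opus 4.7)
The plan is to produce an element of the fraction field of $R$ which is integral over $R$ but not in $R$, following the lead of Example~\ref{ex:d3e2} where $t^3$ served this purpose when $(d,e) = (3,2)$. The natural general candidate is $t^d$, viewed inside the overring $\bbk[s,t]$ via the identification $R \cong \Ima \varphi_W$.

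Integrality is immediate: since $\varphi_W(y_{e-1}) = s^0 t^{de} = (t^d)^e$, the element $t^d$ satisfies the monic polynomial $z^e - y_{e-1} \in R[z]$.

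The remaining task is to show $t^d \notin R$, and this is where the hypothesis $e \geq 2$ enters. I would use that $R \subset \bbk[s,t]$ is a monomial subring generated by the $\varphi_W(x_i) = s^{d-i}t^i$ (of $(s,t)$-degree $d$, for $0 \le i \le d-1$) and the $\varphi_W(y_j) = s^{e-1-j}t^{de-(e-1)+j}$ (of $(s,t)$-degree $de$, for $0 \le j \le e-1$). When $e \geq 2$, any product of two or more generators has $(s,t)$-degree at least $2d > d$, so every element of $R$ of $(s,t)$-degree exactly $d$ must be a $\bbk$-linear combination of the $x$-block generators. Since $t^d$ would correspond to the index $i = d$, which is outside the range $0 \le i \le d-1$, we conclude $t^d \notin R$.

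I do not anticipate any serious obstacle here: once $t^d$ is identified as the witness, both claims reduce to inspecting the generating set under $\varphi_W$. The hypothesis $e \geq 2$ is needed precisely to rule out the equality $y_{e-1} = t^d$, which holds for $e = 1$ and reflects the classical fact that the usual rational normal curve is normal.
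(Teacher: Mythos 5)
Your argument is essentially the paper's: same witness $t^d$, and the same monic equation $z^e - y_{e-1}$ (the paper writes it as $X^e - t^{de}$, which is the same polynomial since $t^{de}=\varphi_W(y_{e-1})$). Your treatment of $t^d\notin R$ is actually more detailed and careful than the paper's, which only asserts this ``since $e\geq 2$''; your grading-by-$(s,t)$-degree argument correctly fills in why.

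However, there is one genuine omission. You announce that you will produce an element of the \emph{fraction field} of $R$ integral over $R$ but not in $R$ --- which is indeed what ``not integrally closed'' requires --- but you then work with $t^d$ only ``inside the overring $\bbk[s,t]$'' and you close by saying ``both claims reduce to inspecting the generating set,'' treating integrality and $t^d\notin R$ as the two things to check. The third claim, $t^d\in\operatorname{Frac}(R)$, is not free: $\operatorname{Frac}(R)$ is a proper subfield of $\bbk(s,t)$, and an element of $\bbk[s,t]$ that is integral over $R$ but lies outside $\operatorname{Frac}(R)$ would say nothing about whether $R$ is integrally closed in its own fraction field. You need to exhibit $t^d$ as a ratio of elements of $R$; the paper does this via
\[
t^d \;=\; \frac{s^{e-1}t^{de-(e-1)}}{\bigl(st^{d-1}\bigr)^{e-1}} \;=\; \frac{\varphi_W(y_0)}{\varphi_W(x_{d-1})^{e-1}}.
\]
With that one line added, your proof is complete and matches the paper's route.
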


\begin{proof}
The monomial $t^d$ is in the field of fractions of $R$ since it can be written as 
\[
t^d = \frac{s^{e-1}t^{de - (e-1)}}{(st^{d-1})^{e-1}}.
\]
Furthermore, $t^d$ is integral over $R$.  In particular, it is a root of the monic polynomial $X^e-t^{de}$.  However, $t^d$ is not in $R$ since $e \geq 2$, so $R$ is not integrally closed.
\end{proof}

The above proposition implies that the weighted rational curve is not projectively normal in the usual sense. However, Brown and Erman also show that $R$ is ``normally generated'' in the sense of their Definition 3.13, which parallels Mumford's definition (\cite{mVarietiesQuadraticEquations}), which algebraically implies the following.  
\begin{proposition}
\label{prop:CohenMacaulay}
\cite{beLinSyzOCurvesIWProjSp} $R$ is Cohen-Macaulay.
\end{proposition}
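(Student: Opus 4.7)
The plan is to invoke Brown and Erman's result from \cite{beLinSyzOCurvesIWProjSp} that the weighted rational curve is normally generated in the sense of their Definition~3.13. As the text preceding the statement indicates, this property is the weighted analogue of Mumford's classical notion of projective normality, and the classical theorem that projectively normal smooth curves have Cohen-Macaulay coordinate rings carries over to the weighted setting by essentially the same cohomological argument.

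The first step is to quote the normally generated statement of Brown-Erman applied to our weighted rational curve. Next, since $R$ is a two-dimensional graded domain with $R_0 = \bbk$, $H^0_\frakm(R) = 0$ holds automatically, so it suffices to verify $H^1_\frakm(R) = 0$ in order to conclude $\depth R = 2 = \dim R$. The standard comparison between local cohomology and sheaf cohomology, combined with the identification of $R$ with the relevant weighted section ring (which is precisely what normal generation provides) and with the vanishing of $H^1$ of positive line bundles on $\bbP^1$, delivers this vanishing.

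The main obstacle is the careful weighted bookkeeping in Mumford's cohomological argument, which must account for the nonstandard $\bbZ$-grading coming from $\bbP(1^d, e^e)$. Brown and Erman carry this out in full generality (arbitrary genus) in \cite{beLinSyzOCurvesIWProjSp}, so specializing to $g = 0$ requires no additional input beyond citing their result. An alternative, more elementary proof becomes available once the determinantal description from Section~\ref{sec:determinantal} is in hand: since $I$ then has the expected codimension for a $2 \times 2$ minor ideal, the Eagon-Northcott complex resolves $R$, and Cohen-Macaulayness follows by Auslander-Buchsbaum. This is the alternate proof alluded to in the introduction.
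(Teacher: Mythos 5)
Your proposal matches the paper's approach exactly: the paper likewise cites Brown--Erman's normal generation result (their Remark~3.14(1) together with Theorem~1.4) as the source of Cohen-Macaulayness, and it also flags the alternate Eagon--Northcott proof you mention, given later as Corollary~\ref{cor:alternateCohenMacaulay}. Your expansion on the local-cohomology vanishing is a reasonable gloss on what the citation is implicitly relying on.
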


This is a combination of Remark~3.14(1) and Theorem~1.4 in \cite{beLinSyzOCurvesIWProjSp}.  We will offer an alternative proof of Cohen--Macaulayness in Corollary \ref{cor:alternateCohenMacaulay}.

\section{Determinantality}
\label{sec:determinantal}

In this section, we show that the weighted rational curve is defined by the maximal minors of a certain matrix, as in the case of the usual rational normal curve.  In the case of the rational normal curve, this is a classical result; see \cite[cf.~Proposition~6.1]{eGeomOSyz} for a proof.  In line with Philosophy \ref{phil:multigradedSyz}, we will see how the classical result gets deformed:  in the case of a (classical) rational normal curve of degree $d$, the minors come from a $2\times d$ linear matrix; in the type $d,e$ weighted case, we have a $2\times (d+e-1)$ matrix, all of whose entries are linear, with a single exception.  To be precise, we prove the following theorem:

\begin{theorem}
\label{thm:determinantal}
Let $I_2$ be the ideal of $S$ generated by the $2 \times 2$ minors of the following $2 \times (d+e-1$) matrix:
\begin{equation} \label{eq:definingmatrix}
M = \begin{bmatrix}
x_0 & x_1 & \ldots & x_{d-2} & x_{d-1}^e & y_0 & \ldots & y_{e-2}\\
x_1 & x_2 & \ldots & x_{d-1} & y_0 & y_1 & \ldots & y_{e-1}
\end{bmatrix}.
\end{equation}

Recall that $I$ is the defining ideal of the weighted rational curve of type $d,e$.
Then $I = I_2$.
\end{theorem}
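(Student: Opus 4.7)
The plan is to prove the two containments $I_2 \subseteq I$ and $I \subseteq I_2$ separately. The first is a direct verification; the second uses a codimension count plus a localization argument.

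For $I_2 \subseteq I$, I would apply $\varphi_W$ to each column of $M$ and observe that every column of $\varphi_W(M)$ is a $\bbk[s,t]$-multiple of $(s,t)^\top$: for $i \leq d-2$, column $i$ becomes $s^{d-i-1}t^i \cdot (s,t)^\top$; the seam column $(x_{d-1}^e,\,y_0)^\top$ becomes $s^{e-1}t^{de-e} \cdot (s,t)^\top$; and for $k \leq e-2$, column $d+k$ becomes $s^{e-2-k}t^{de-e+1+k} \cdot (s,t)^\top$. Since all columns of $\varphi_W(M)$ are $\bbk[s,t]$-proportional, every $2 \times 2$ minor vanishes identically.

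For the reverse inclusion, I would proceed in four steps. First, show $V(I_2) = V(I)$ set-theoretically in $\operatorname{Spec} S$: given $p \in V(I_2)$, the matrix $M(p)$ has rank $\leq 1$, so its columns are $\bbk$-proportional. If $x_0(p) = a_0 \neq 0$, set $r = x_1(p)/a_0$; chaining column-proportionality through the $x$-columns gives $x_i(p) = a_0 r^i$, the seam column then forces $y_0(p) = a_0^e r^{de-e+1}$, and the remaining $y$-columns yield $y_k(p) = a_0^e r^{de-e+1+k}$, matching the parametrization of the scaled preimage of $[1:r]$ under $\psi_W$. If $a_0 = 0$, propagation forces $x_i(p) = 0$ for all $i$ and $y_j(p) = 0$ for $j \leq e-2$, with $y_{e-1}(p)$ free, giving the affine cone over $\psi_W([0:1])$. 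Second, since $V(I_2) = V(I)$ is the affine cone over an irreducible curve, $\operatorname{codim}(I_2) = d+e-2$, the expected codimension for the maximal minors of a $2 \times (d+e-1)$ matrix; by the Eagon--Northcott theorem, $S/I_2$ is Cohen--Macaulay, and since $\sqrt{I_2} = I$, we get $\operatorname{Ass}(S/I_2) = \{I\}$.

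Third, I would compare $I_2$ and $I$ on the chart $\{x_0 \neq 0\}$ by passing to the degree-zero subring $(S_{x_0})_0 = \bbk[u_1,\ldots,u_{d-1},v_0,\ldots,v_{e-1}]$ with $u_i = x_i/x_0$ and $v_j = y_j/x_0^e$. Dividing the minors obtained from pairing column $0$ with columns $j$, $d-1$, and $d+k$ by the appropriate powers of $x_0$ yields $u_{j+1} = u_1 u_j$, $v_0 = u_1 u_{d-1}^e$, and $v_{k+1} = u_1 v_k$, from which $u_i = u_1^i$ and $v_j = u_1^{de-e+1+j}$ follow inductively; these are precisely the relations defining $I_{x_0}$, so $(I_2)_{x_0} = I_{x_0}$. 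Fourth, if $I_2 \neq I$ then $I/I_2$ is a nonzero submodule of $S/I_2$; by Step 2, $I \in \operatorname{Ass}(I/I_2)$ and hence $(I/I_2)_I \neq 0$. But $x_0 \notin I$ means localization at $I$ factors through $S_{x_0}$, and Step 3 gives $(I/I_2)_{x_0} = 0$, a contradiction. Therefore $I_2 = I$.

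The main obstacle is the set-theoretic comparison in Step 1. For the classical rational normal curve, the defining matrix is linear and a standard Vandermonde-style argument applies. Here the seam column contains the nonlinear entry $x_{d-1}^e$, so propagating the rank-$1$ condition across this column requires the algebraic compatibility $a_{d-1}^e = a_0^e r^{e(d-1)}$ to produce exactly the value $a_0^e r^{de-e+1}$ predicted by the parametrization for $y_0(p)$. This is the one delicate place where the deformation of the classical matrix matters, and where the power $e$ plays an essential role; once past this step, the remaining $y$-coordinates follow by induction from the standard paired-column minors, and Steps 2--4 are a routine application of Eagon--Northcott and the primary-decomposition machinery.
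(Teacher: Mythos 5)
Your proposal is correct, and it takes a genuinely different route from the paper. The paper proves $I = I_2$ by showing that $S/I_2 \twoheadrightarrow R$ is an isomorphism of Hilbert series: it computes $HS_R$ directly from the semigroup (Proposition~\ref{prop:hilbertseriesR}), computes $HS_{S/I_2}$ from the Eagon--Northcott resolution after passing through the scroll coordinate ring $S' = S[v]/I_v$ (Proposition~\ref{prop:hilbseriesI2}), and then matches the two by an explicit binomial manipulation. (The paper also remarks that the Gr\"obner-basis argument of Section~\ref{sec:GB} gives a second, independent proof.) You instead prove $\sqrt{I_2} = I$ set-theoretically by propagating rank-one through the columns of $M$ --- correctly handling the nonlinear seam column --- deduce $\operatorname{codim} I_2 = d+e-2$, invoke Eagon--Northcott to get Cohen--Macaulayness and hence unmixedness (so $\operatorname{Ass}(S/I_2) = \{I\}$), and finish by localizing at $x_0$ and using that an embedded difference would have to be supported at $I$, where $x_0$ is a unit. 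This sidesteps the Hilbert series computation entirely and is more geometric; the paper's computational approach has the side benefit of producing the explicit Hilbert series and Betti numbers (Corollary~\ref{cor:eagonnorthcott}) that are reused later. Both proofs pass through the same codimension count and Eagon--Northcott input, but use it for different purposes (Hilbert series versus unmixedness).

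One small spot to tighten: in Step~3 you assert that the relations $u_{j+1} = u_1 u_j$, $v_0 = u_1 u_{d-1}^e$, $v_{k+1} = u_1 v_k$ "are precisely the relations defining $I_{x_0}$," but that is what needs proving. The cleaner statement is: those relations show $(S_{x_0})_0/(I_2)_0$ is generated as a $\bbk$-algebra by the image of $u_1$, hence is a quotient of $\bbk[u_1]$; since it also surjects onto $(S_{x_0})_0/(I)_0 \cong \bbk[u_1]$ and a surjective endomorphism of a Noetherian ring is injective, we get $(I_2)_0 = (I)_0$, and then $(I_2)_{x_0} = I_{x_0}$ because $x_0$ is a degree-one unit so graded ideals of $S_{x_0}$ are determined by their degree-zero parts. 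With that addition the argument is complete.
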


Note that for any column of $M$, the ratio of the entries after applying $\varphi$ (recall that $\varphi$ was defined in Section \ref{sec:Background}) is $\frac{t}{s}$.  Thus, each minor of $M$ maps to $0$ under $\varphi$, so we know that $I_2 \subseteq I$.  To prove that these ideals are in fact equal, we will prove that the Hilbert series of $S/I_2$ is equal to that of $S/I$.  This relies on the following two results.

\begin{proposition}\label{prop:hilbertseriesR}
The Hilbert function and Hilbert series of $R$ are
$$
h_R(a) = da+1-a \bmod e \ \text{ and } \
HS_{R}(z) = \frac{1+(d-1)(z+z^2+\ldots+z^e)+(e-1)z^e}{(1-z)(1-z^e)}.
$$
\end{proposition}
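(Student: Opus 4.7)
The plan is to compute the Hilbert function $h_R(a)$ directly by identifying a monomial basis of $R_a$ inside $\bbk[s,t]$, and then sum the resulting generating series. Since $\varphi$ sends each generator of $S$ to a monomial, $R$ is a graded monomial subring of $\bbk[s,t]$; moreover, because $\deg x_i = 1$ and $\varphi(x_i)$ has $s,t$-degree $d$, while $\deg y_j = e$ and $\varphi(y_j)$ has $s,t$-degree $de$, the space $R_a$ sits inside $\bbk[s,t]_{da}$ and is spanned by those monomials $s^{da-j}t^j$ arising as images of products $\varphi(x_{i_1}\cdots x_{i_m} y_{j_1}\cdots y_{j_n})$ with $m + en = a$.

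I would parametrize these images by the pair $(m,n)$ with $m+en=a$. A direct computation shows that the $t$-exponent of such a product equals $\sum_k i_k + n(de-e+1) + \sum_l j_l$ with $i_k \in \{0,\ldots,d-1\}$ and $j_l \in \{0,\ldots,e-1\}$. Since $\sum_k i_k$ ranges over every integer in $[0, m(d-1)]$ and $\sum_l j_l$ over every integer in $[0, n(e-1)]$, the achievable $t$-exponents for fixed $n$ form the integer interval $T_n = [n(de-e+1),\, (d-1)a + en]$. The main step is then to prove
\[
\bigcup_{n=0}^{\lfloor a/e \rfloor} T_n = [0,\, da - (a \bmod e)],
\]
whence $h_R(a) = da + 1 - (a \bmod e)$. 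The endpoints match by direct computation, so the substantive content is that consecutive intervals abut. Writing $a = ek + r$ with $0 \le r < e$, I expect this to reduce to $(d-1)(a-e) \ge n(de-2e+1)$ for $n \le k-1$, which simplifies to the manifestly nonnegative inequality $(d-1)r + (e-1)(k-1) \ge 0$. This abutment calculation is the main (though mild) obstacle.

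Once $h_R(a)$ is in hand, the Hilbert series follows by summation. Writing $a = ek + r$ and splitting the sum yields
\[
HS_R(z) = \frac{de\, z^e}{(1-z)(1-z^e)} + \frac{\sum_{r=0}^{e-1}((d-1)r + 1)z^r}{1-z^e},
\]
and combining over the common denominator $(1-z)(1-z^e)$, a short telescoping computation applied to $(1-z)\sum_{r=0}^{e-1}((d-1)r+1)z^r$ collects the numerator into the stated form $1 + (d-1)(z + z^2 + \cdots + z^e) + (e-1)z^e$.
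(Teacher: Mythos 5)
Your proof is correct, and it takes a genuinely different combinatorial route from the paper's. The paper views $R$ as the semigroup ring $\bbk[\Lambda]$, observes that in every degree $a = qe$ the ring $R_a$ already coincides with the full Veronese piece of $\bbk[s,t]_{da}$, reduces the count of missing monomials in degree $qe+r$ to the case $q=0$ by a factorization argument through the complete degree-$qe$ piece, and then lists the $r$ missing monomials $t^{rd},\,st^{rd-1},\ldots,s^{r-1}t^{rd-r+1}$ explicitly. You instead stratify the monomials of degree $a$ by the number $n$ of $y$-variables used, compute the interval $T_n = [n(de-e+1),\,(d-1)a+en]$ of achievable $t$-exponents for each $n$, and show the intervals abut. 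Your stratification by $n$ is arguably the cleaner of the two: it sidesteps the paper's (somewhat terse) reduction-to-$q=0$ step, and the abutment inequality is transparent. The Hilbert series assembly is likewise organized differently — you split by $a = ek+r$ over the two indices, while the paper distributes over $da + 1 - a\bmod e$ — but both arrive at the same numerator after collecting over $(1-z)(1-z^e)$.

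One small point worth tightening when you write this up: the abutment condition $(d-1)(a-e) \ge n(de-2e+1)$ for $0 \le n \le k-1$ only "reduces to" the single inequality $(d-1)r + (e-1)(k-1) \ge 0$ (the case $n=k-1$) when $de - 2e + 1 \ge 0$, i.e.\ when $d \ge 2$ or $e = 1$, since only then is $n = k-1$ the binding value. In the remaining case $d=1$, $e \ge 2$, one has $de-2e+1 = 1-e < 0$ and $(d-1)(a-e) = 0$, so the inequality $0 \ge n(1-e)$ holds trivially for all $n \ge 0$. Both branches are easy, but the case split should be made explicit rather than folded into a single "manifestly nonnegative" expression.
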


\begin{proposition}\label{prop:hilbseriesI2}
The Hilbert series of $S/I_2$ is:
$$
HS_{S/I_2}(z) = \frac{1}{(1-z)^d(1-z^e)^e}\left( 1+ \sum_{k=1}^{d+e-2} (-1)^k k z^{k+1}\sum_{i=0}^{k+1}{{d-1}\choose{k+1-i}}{e\choose i}z^{i(e-1)}\right).
$$
\end{proposition}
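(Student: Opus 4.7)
The plan is to compute $HS_{S/I_2}(z)$ via the Eagon--Northcott complex. The essential input is that $I_2$ has the expected codimension $d+e-2$ for an ideal of maximal minors of a $2\times (d+e-1)$ matrix; once that is in place, the Eagon--Northcott theorem furnishes a graded free resolution of $S/I_2$ of length $d+e-2$, and the Hilbert series follows from the graded ranks of the resolution.

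The codimension check is where the nonlinear entry $x_{d-1}^e$ demands care, so I would verify it by directly parameterizing $V(I_2) \subseteq \bbA^{d+e}$. A nonzero point of $V(I_2)$ corresponds to a nonzero vector $(a,b) \in \bbk^2$ (up to scaling) such that every column of $M$ is a scalar multiple of $(a,b)^T$. The two degenerate cases $a=0$ and $b=0$ each cut out only a one-dimensional locus: when $a=0$ the top row vanishes, forcing $x_0, \dots, x_{d-2}, y_0, \dots, y_{e-2}$ and also $x_{d-1}$ (from $x_{d-1}^e = 0$) to be zero, leaving only $y_{e-1}$ free, and $b=0$ is symmetric. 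In the generic case, after normalizing $(a,b)=(1,t)$, the successive linear columns $(x_i, x_{i+1})$ inductively force $x_i = \lambda_0 t^i$ for $i = 0, \dots, d-1$; the nonlinear column $(x_{d-1}^e, y_0)$ then fixes $y_0 = \lambda_0^e t^{e(d-1)+1}$; and the remaining linear columns $(y_{j-1}, y_j)$ give $y_j = \lambda_0^e t^{e(d-1)+1+j}$ for $j = 1, \dots, e-1$. This parameterizes the dominant component by $(\lambda_0, t)$, so $\dim V(I_2) = 2$, giving the claimed codimension.

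With the expected codimension established, the Eagon--Northcott theorem provides a graded free resolution of $S/I_2$. Viewing $M$ as a map $\phi \colon F \to G$ of graded free $S$-modules with $G = S^2$ (ungraded) and $F = \bigoplus_{j=0}^{d+e-2} S(-c_j)$, where $c_j = 1$ for $j = 0, \dots, d-2$ and $c_j = e$ for $j = d-1, \dots, d+e-2$, the $k$-th term for $1 \le k \le d+e-2$ is $F_k = \Sym_{k-1}(G^*) \otimes \wedge^{k+1} F \otimes (\wedge^2 G)^*$. Since $G$ is ungraded, all of the grading is carried by $\wedge^{k+1} F$, and hence $F_k \cong \bigoplus_{|J|=k+1} S^k\bigl(-\sum_{j \in J} c_j\bigr)$.

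Finally, the Hilbert series is the alternating sum
$$HS_{S/I_2}(z) = HS_S(z) \left( 1 + \sum_{k=1}^{d+e-2} (-1)^k k \sum_{|J|=k+1} z^{\sum_{j \in J} c_j} \right).$$
Sorting $(k+1)$-subsets $J$ by the number $i$ of indices drawn from the $e$ degree-$e$ columns gives $\sum_{|J|=k+1} z^{\sum_{j \in J} c_j} = z^{k+1}\sum_{i=0}^{k+1} \binom{d-1}{k+1-i}\binom{e}{i} z^{i(e-1)}$, and combining with $HS_S(z) = 1/((1-z)^d(1-z^e)^e)$ yields the formula in the proposition. The main obstacle is the codimension check: the lone nonlinear column prevents a direct appeal to the generic linear case, and the parameterization above is where the special structure of the matrix (in particular, that $x_{d-1}^e$ aligns consistently with the surrounding linear columns) is genuinely used.
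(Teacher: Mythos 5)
Your proof is correct and arrives at the same formula, but the codimension check --- which you rightly identify as the crux --- is handled differently from the paper. The paper replaces $x_{d-1}^e$ with a fresh degree-$e$ variable $v$ to obtain the $1$-generic matrix $M_v$; then $S' = S[v]/I_v$ is a Cohen--Macaulay integral domain of the expected codimension $d+e-2$ (it is the coordinate ring of the rational normal scroll $\mathcal{S}(d-1,e)$), and since $S/I_2 \cong S'/\ideal{v - x_{d-1}^e}$ with $v - x_{d-1}^e$ a nonzerodivisor in the domain $S'$, the codimension of $I_2$ follows. You instead parameterize $V(I_2) \subseteq \mathbb{A}^{d+e}$ directly, splitting by whether the rank-one column factor $(a,b)$ is degenerate, and find the dominant locus is two-dimensional via the explicit map $(\lambda_0,t) \mapsto (\lambda_0, \lambda_0 t, \dots, \lambda_0 t^{d-1}, \lambda_0^e t^{e(d-1)+1}, \dots)$. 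This is more elementary, avoiding the $1$-generic machinery, and it has a pleasant side effect: your map is, up to rescaling, $\psi_W$ itself, so the parameterization essentially exhibits $V(I_2)$ as the weighted rational curve --- pushed a little further it would yield $I=I_2$ directly from irreducibility, bypassing the Hilbert-series comparison in Theorem~\ref{thm:determinantal}. What the scroll route buys the paper is the auxiliary ring $S'$, which gets reused in Sections~\ref{sec:GB} and~\ref{sec:Koszulness}. (One small point to spell out in your version: the dimension count gives $\dim V(I_2)=2$, and one passes to $\operatorname{height}(I_2) = d+e-2$ via the fact that height equals codimension in a polynomial ring, which is the hypothesis the Eagon--Northcott criterion actually needs.) From there the two arguments coincide: invoke the Eagon--Northcott resolution and stratify the graded rank of $\bigwedge^{k+1}F$ by the number $i$ of degree-$e$ columns chosen.
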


Before proving these two propositions, we will show how they combine to yield a proof of Theorem~\ref{thm:determinantal}, the main result of this section.  The difficulty lies in proving that the two Hilbert series above are, in fact, equal.  In particular, as we observed in Example \ref{ex:d6e2}, the Hilbert series for $S/I_2$ has redundancy which is obscured in that for $R$.

\begin{proof}[Proof of Theorem \ref{thm:determinantal}]
Since $I_2 \subseteq I$, there is a surjection $S/I_2 \twoheadrightarrow R$.  If this map is an isomorphism in each degree, then it is an isomorphism of rings.  To show this, we will argue that $S/I$ and $S/I_2$ have the same Hilbert series.

In the Hilbert series for $S/I_2$, we can rewrite the double sum by substituting in $k+1-i+i-1$ for $k$:
$$
\sum_{k=1}^{d+e-2}\sum_{i=0}^{k+1}(-1)^k((k+1-i)+i-1)\binom{{d-1}}{{k+1-i}}z^{k+1-i}\binom{e}{i}(z^e)^i.
$$
Distributing over $(k+1-i)+i-1$ splits this sum into the following three pieces, each of which we will handle separately:
\begin{align*}
HS_{S/I_2}(z) = \frac{1}{(1-z)^d(1-z^e)^e}\Bigg( 1&+  \sum_{k=1}^{d+e-2}\sum_{i=0}^{k+1}(-1)^k(k+1-i)\binom{{d-1}}{{k+1-i}}z^{k+1-i}\binom{e}{i}(z^e)^i \\
&+ \sum_{k=1}^{d+e-2}\sum_{i=0}^{k+1}(-1)^ki\binom{d-1 }{ k+1-i}z^{k+1-i}\binom{e}{i}(z^e)^i \\
&+  \sum_{k=1}^{d+e-2}\sum_{i=0}^{k+1}(-1)^{k+1}\binom{d-1}{ k+1-i}z^{k+1-i}\binom{e}{i}(z^e)^i\Bigg).
\end{align*}
We will refer to the three sums as $p_1(z)$, $p_2(z)$, and $p_3(z)$, respectively.
The first sum, $p_1(z)$, can be handled as follows: 
\begin{align*}
p_1(z)&=\sum_{k=1}^{d+e-2}\sum_{i=0}^{k+1}(-1)^k(k+1-i)\binom{{d-1}}{{k+1-i}}z^{k+1-i}\binom{e}{i}(z^e)^i.\\
\intertext{By the identity $(k+1-i) \binom{d-1}{k+1-i} = (d-1)\binom{d-2}{k-i}$ and dropping the non-contributing $i=k+1$ term, this becomes:}
&=(d-1)z\sum_{k=1}^{d+e-2}\sum_{i=0}^{k}(-1)^k\binom{d-2}{k-i} z^{k-i}\binom{e}{i}(z^e)^{i}.\\
\intertext{By splitting $(-1)^k$ across $z^{k-i}$ and $(z^e)^{i}$, we obtain:}
&= (d-1)z\sum_{k=1}^{d+e-2}\sum_{i=0}^{k}(-z)^{k-i}\binom{d-2}{k-i}(-z^e)^{i} \binom{e}{i}.\\
\intertext{Finally, observe that, if the outer sum started at $k=0$, the double sum would be $(1-z)^{d-2}(1-z^e)^e$ on the nose.  So the sum becomes:}
&= (d-1)z\left[(1-z)^{d-2}(1-z^e)^e- 1\right].
\end{align*}
By a similar argument (now using that $i \binom{e}{i} = e \binom{e-1}{i-1})$), we can rewrite $p_2(z)$:
\begin{align*}
p_2(z)&=\sum_{k=1}^{d+e-2}\sum_{i=0}^{k+1}(-1)^ki\binom{d-1}{k+1-i}z^{k+1-i}\binom{e}{i}(z^e)^i\\
&= ez^e\sum_{k=1}^{d+e-2}\sum_{i=0}^{k}(-z)^{k-i}\binom{d-1}{k-i}(-z^e)^i\binom{e-1}{i}\\
&= ez^e\left[(1-z)^{d-1}(1-z^e)^{e-1}-1\right].
\end{align*}
Similarly, for $p_3(z)$ we have:
\begin{align*}
p_3(z)&=\sum_{k=1}^{d+e-2}\sum_{i=0}^{k+1}(-1)^{k+1}\binom{d-1}{k+1-i}z^{k+1-i}\binom{e}{i}(z^e)^i\\
&= \sum_{k=2}^{d+e-1}\sum_{i=0}^{k}(-z)^{k-i}\binom{d-1}{k-i}(-z^e)^i\binom{e}{i}\\
&= (1-z)^{d-1}(1-z^e)^e - (1-(d-1)z-ez^e).
\end{align*}
Putting this together and canceling appropriately, we have
\begin{align*}
HS_{S/I_2}(z) &= \frac{1}{(1-z)^d(1-z^e)^e}\left( 1+p_1(z)+p_2(z)+p_3(z)\right)\\
&=\frac{(d-1)z(1-z)^{d-2}(1-z^e)^e+ez^e(1-z)^{d-1}(1-z^e)^{e-1}+ (1-z)^{d-1}(1-z^e)^e}{(1-z)^d(1-z^e)^e}\\
&= \frac{1+(d-1)(z+z^2+\ldots+z^e)+(e-1)z^e}{(1-z)(1-z^e)}.
\end{align*}
This is equal to $HS_{S/I}(z)$, completing the proof.
\end{proof}

We delay the proofs of Propositions \ref{prop:hilbertseriesR} and \ref{prop:hilbseriesI2} to Subsection \ref{subsec:proofs}, and first give some corollaries.

\subsection{Corollaries}
\label{subsec:corollaries}

Theorem \ref{thm:determinantal}, together with the proof of Proposition \ref{prop:hilbseriesI2}, gives us the resolution of $R$ over $S$, and its graded Betti numbers.

\begin{corollary}\label{cor:eagonnorthcott}
$R$ is resolved by the Eagon--Northcott complex for $M$, and its graded Betti numbers are 
\[
\beta_{k,k+1+i(e-1)}(R) = k \left( \binom{d-1}{k+1-i} \binom{e}{i}\right) \text{ for } i = 0, \ldots, k+1
\]
and $0$ otherwise, for all $k=1, \ldots, d+e-2$.. 
\end{corollary}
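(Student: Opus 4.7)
The plan is to apply the classical Eagon--Northcott theorem (see e.g.\ \cite{eGeomOSyz}), which says that if a $t \times n$ matrix $M$ over a polynomial ring satisfies $\operatorname{codim} I_t(M) = n - t + 1$, then the Eagon--Northcott complex of $M$ is a free resolution of $S/I_t(M)$. By Theorem \ref{thm:determinantal}, $R = S/I_2(M)$ for the $2 \times (d+e-1)$ matrix $M$ in \eqref{eq:definingmatrix}, so the maximum possible codimension in our setting is $d+e-2$. Proposition \ref{prop:hilbertseriesR} gives $HS_R(z) = \frac{1 + (d-1)(z + \cdots + z^e) + (e-1)z^e}{(1-z)(1-z^e)}$, which has a double pole at $z=1$; hence $\dim R = 2$ and $\operatorname{codim}(I) = (d+e) - 2 = d+e-2$. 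The codimension hypothesis is therefore satisfied, so the Eagon--Northcott complex of $M$ is a resolution of $R$, and it is minimal because every entry of $M$ lies in the irrelevant ideal.

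To extract the graded Betti numbers, I would carefully track the internal degrees of the columns of $M$. Columns $0$ through $d-2$ --- namely $d-1$ columns --- each have both entries of degree $1$; columns $d-1$ through $d+e-2$ --- namely $e$ columns --- each have both entries of degree $e$, the column $d-1$ included (its top entry is $x_{d-1}^e$). Writing $F = \bigoplus_c S(-\deg c)$ as $c$ runs over the columns of $M$ and taking $G = S^2$ in internal degree $0$, the $k$-th term of the Eagon--Northcott complex takes the form $\Lambda^{k+1} F \otimes \Sym_{k-1}(G)^* \otimes \det(G)^*$. A $(k+1)$-subset of columns containing $i$ columns of degree $e$ and $k+1-i$ columns of degree $1$ contributes a summand of $\Lambda^{k+1} F$ in internal degree $ie + (k+1-i) = k+1+i(e-1)$, and there are $\binom{d-1}{k+1-i}\binom{e}{i}$ such subsets. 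Multiplying by the rank $k$ of $\Sym_{k-1}(G)^*$ yields $\beta_{k,k+1+i(e-1)}(R) = k\binom{d-1}{k+1-i}\binom{e}{i}$, with all other graded Betti numbers equal to $0$.

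The argument is a direct application of Eagon--Northcott once the codimension input is in hand; the only step requiring attention is the degree bookkeeping in the second paragraph, and this reduces to identifying the ``light'' (degree $1$) and ``heavy'' (degree $e$) columns and then summing over $(k+1)$-subsets. I expect no real obstacle beyond this careful accounting.
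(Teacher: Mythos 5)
Your proposal is correct, and the overall strategy — invoke the Eagon--Northcott theorem for the $2\times(d+e-1)$ matrix $M$ once the codimension hypothesis is verified, and then read off the graded Betti numbers by stratifying $\textstyle\bigwedge^{k+1}F$ according to how many of the $e$ ``heavy'' degree-$e$ columns and $d-1$ ``light'' degree-$1$ columns are chosen — is the same as the paper's (the paper folds the EN argument into the proof of Proposition \ref{prop:hilbseriesI2} and then quotes it). The one place you diverge is in verifying $\operatorname{codim} I_2(M)=d+e-2$: you deduce $\dim R = 2$ from the double pole of $HS_R(z)$ at $z=1$ (Proposition \ref{prop:hilbertseriesR}) and apply Theorem \ref{thm:determinantal}, whereas the paper lifts $M$ to the one-generic matrix $M_v$ over $S[v]$, uses \cite[Theorem 6.4]{eGeomOSyz} to get a Cohen--Macaulay domain $S'=S[v]/I_v$ of the right codimension, and then specializes along the nonzerodivisor $v-x_{d-1}^e$. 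Your route is a shade more direct for this corollary since $HS_R$ and $I=I_2$ are already in hand; the paper's route is the one that is actually needed earlier (in Proposition \ref{prop:hilbseriesI2}, before $I=I_2$ is known), so it avoids circularity in the paper's logical order, but at the point of the corollary both are legitimate. Your degree bookkeeping and the minimality remark (all matrix entries, including $x_{d-1}^e$, lie in $\mathfrak{m}$) are correct.
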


Theorem \ref{thm:determinantal} also leads to an alternative proof that $R$ is Cohen-Macaulay.

\begin{corollary}
\label{cor:alternateCohenMacaulay}
$R$ is Cohen--Macaulay.
\end{corollary}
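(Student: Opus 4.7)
The plan is to derive Cohen-Macaulayness directly from Corollary \ref{cor:eagonnorthcott} via the Auslander--Buchsbaum formula; the only ``work'' is assembling facts that are already in hand.

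First, I would compute $\depth_S(R)$. The Eagon--Northcott complex associated to the $2\times (d+e-1)$ matrix $M$ has length equal to its expected codimension $(d+e-1)-2+1 = d+e-2$. Since Corollary~\ref{cor:eagonnorthcott} tells us this complex is a free resolution of $R$ as an $S$-module, we have $\mathrm{pd}_S(R) = d+e-2$. Because $S$ is a polynomial ring in $d+e$ variables, $\depth S = d+e$, and Auslander--Buchsbaum yields
\[
\depth_S(R) \;=\; \depth S - \mathrm{pd}_S(R) \;=\; (d+e)-(d+e-2) \;=\; 2.
\]

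Next I would compare this to $\dim R$. From the Hilbert series in Proposition~\ref{prop:hilbertseriesR}, the order of the pole at $z=1$ of $HS_R(z)$ is $2$, so $\dim R = 2$. (Equivalently, $R$ is the homogeneous coordinate ring of a curve, so its affine cone has dimension $2$.) Combining the two computations gives $\depth R = 2 = \dim R$, which is exactly the Cohen--Macaulay condition.

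I do not expect any serious obstacle here, since the essential content has been built up in Theorem~\ref{thm:determinantal} and Corollary~\ref{cor:eagonnorthcott}. The only point one might want to double-check is that the passage from $\depth_S(R)$ to $\depth_R(R)$ is automatic (it is, since a regular sequence on $R$ inside $\mathfrak{m}_R$ is a regular sequence on $R$ viewed as an $S$-module via the quotient map). Thus the proof is essentially a one-line invocation of Auslander--Buchsbaum once the length of the Eagon--Northcott resolution and the Krull dimension of $R$ are in place.
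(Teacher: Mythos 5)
Your proof is correct and follows essentially the same route as the paper's: both arguments get $\operatorname{pd}_S(R)=d+e-2$ from the Eagon--Northcott resolution, combine this with $\dim R = 2$, and conclude Cohen--Macaulayness via Auslander--Buchsbaum (explicitly in your write-up, implicitly in the paper). The only cosmetic difference is that you read off $\dim R = 2$ from the pole order of the Hilbert series, while the paper appeals to the semigroup-ring description of $R$.
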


\begin{proof}
Because $R$ is a semigroup ring (see the proof of Proposition~\ref{prop:hilbertseriesR} for this perspective) where the semigroup has dimension $2$, $R$ also has dimension $2$. This means that $I$ has codimension $d+e-2$. By Corollary \ref{cor:eagonnorthcott}, the projective dimension of $R$ is also $d+e-2$.  Therefore, $R$ is Cohen-Macaulay.
\end{proof}

\begin{example}
As before, consider the weighted rational curve where $d=3$ and $e=2$.  Recall (from Proposition \ref{prop:hilbseriesI2}) that the Hilbert series for $R$ is given by:
\[
HS_R(z) = \frac{1-z^2-4z^3+3z^4+4z^5-3z^6}{(1-z)^3(1-z^2)^2}.
\]
The Betti table for $R$ over $S$ is given below:
\[\begin{matrix}
 & 0 & 1 & 2 & 3\\
0: & 1 & . & . & .\\
1: & . & 1 & . & .\\
2: & . & 4 & 4 & .\\
3: & . & 1 & 4 & 3
\end{matrix}\]
Note that the Hilbert series does not entirely ``predict'' the Betti table.  That is, there is cancellation: both $\beta_{1,4}(R)$ and $\beta_{2,4}(R)$ are nonzero, so both of these contribute -- with opposite sign -- to the $3z^4$ term in the numerator of $HS_R(z)$. The reduced Hilbert series for $R$ (from Proposition \ref{prop:hilbertseriesR}) is:
\[
HS_R(z) = \frac{1+2z+3z^3}{(1-z)(1-z^2)}.
\]
\end{example}

\subsection{Proofs of Propositions \ref{prop:hilbertseriesR} and \ref{prop:hilbseriesI2}}
\label{subsec:proofs}

\begin{proof}[Proof of Proposition \ref{prop:hilbertseriesR}]

We directly compute the size of the monomial basis for a given degree. It will be convenient to view $R$ as the semigroup algebra
\[
\bbk[\Lambda] = \bbk[s^d, s^{d-1}t, \ldots, st^{d-1}, s^{e-1}t^{de-e+1}, s^{e-2}t^{de-e+2}, \ldots, st^{de-1}, t^{de}].
\]
Note that, by the usual normalization from the degree-$d$ Veronese map, monomials in $\bbk[\Lambda]$ of total degree $da$ correspond to monomials in $R$ of degree $a$.

By virtue of the monomials in the log complete series, every monomial $s^u t^{k\cdot de - u}$ is in $R$ as in the usual rational normal curve, so $\dim_\bbk (R)_{qe} = dqe+1$. To compute $\dim_\bbk R_{qe+r}$ where $1 \leq r \leq e-1$, we can count how many missing monomials there are from the usual Veronese basis, which has size $d(qe+r)+1$. It suffices to do this for $q=0$, since monomials of larger degree can be written $(s^u t^{kde-u}) \cdot s^i t^{\ell d-i}$ where $\ell < e$, using the guaranteed monomials of total power $k \cdot de$. Now observe that there are $r$ monomials missing in $R_r$: $\{t^{rd}, st^{rd-1}, s^2t^{rd-2}, \ldots, s^{r-1}t^{rd-r+1}\}$. Therefore $\dim_\bbk R_a = \dim_\bbk \bbk[\Lambda]_{da} = da + 1 - a \bmod e$. One can find a more general version of such counts in \cite[Proposition~6.3]{beLinSyzOCurvesIWProjSp}, where $Q$ denotes the ``missing'' monomials (though $d$ and $e$ must be suitably normalized). 

Now we manipulate the given generating function to get an expression for the Hilbert series: 
\[
\begin{aligned}
HS_{R}(z) &= \sum\limits_{a=0}^\infty (da+1 - a \bmod e) z^a \\
&= d\sum\limits_{a=0}^\infty az^a + \sum\limits_{a=0}^\infty z^a - \sum\limits_{a=0}^\infty (a \bmod e) z^a \\
&= dz \sum\limits_{a=0}^\infty \binom{a+1}{1}z^a + \sum\limits_{a=0}^\infty z^a - (z + 2z^2 + \cdots + (e-1)z^{e-1}) \sum\limits_{i=0}^\infty (z^e)^i \\ 
&= \frac{dz}{(1-z)^2} + \frac{1}{1-z} - \frac{z + 2z^2 + \cdots + (e-1)z^{e-1}}{1-z^e} \\ 
&= \frac{1 + (d-1)(z+z^2 + \cdots + z^e)+(e-1)z^e}{(1-z)(1-z^e)}
\end{aligned}
\]
\end{proof}

\begin{figure}
\centering
\begin{subfigure}{.5\textwidth}
  \centering
\begin{tikzpicture}[scale=0.5]
\foreach \x in {0,...,9}
\draw[line width=.6pt,color=black!15,dashed] (\x,0-.2) -- (\x,18+.2);
\foreach \y in {0,...,18}
\draw[line width=.6pt,color=black!15,dashed] (0-.2,\y) -- (9+.2,\y);
\draw[->] (0-.2,0) -- (9+.4,0) node[right] {$i$};
\draw[->] (0,0-.2) -- (0,18+.4) node[above] {$j$};
\foreach \x in {0,3,6,9}
\draw[shift={(\x,0)},color=black] (0pt,2pt) -- (0pt,-2pt) node[below] {\footnotesize $\x$};
\foreach \y in {0,3,6,9,12,15,18}
\draw[shift={(0,\y)},color=black] (2pt,0pt) -- (-2pt,0pt) node[left] {\footnotesize $\y$};
\foreach \s in {0,...,3} \foreach \t in {0,...,6} \draw [fill=black] (3*\s, 3*\t) circle (4pt);

\foreach \s in {1,...,3} \foreach \t in {0,...,5} \draw [fill=black] (3*\s-1, 3*\t+1) circle (4pt);

\foreach \s in {1,...,3} \foreach \t in {0,...,5} \draw [fill=black] (3*\s-2, 3*\t+2) circle (4pt);

\foreach \s/\t in {0/3, 0/9, 0/15} \draw [color=black, fill=white] (\s, \t) circle (4pt);
\end{tikzpicture}
\caption{$d=3,e=2$}
\end{subfigure}%
\begin{subfigure}{.5\textwidth}
  \centering
\begin{tikzpicture}[scale=0.5]
\foreach \x in {0,...,9}
\draw[line width=.6pt,color=black!15,dashed] (\x,0-.2) -- (\x,18+.2);
\foreach \y in {0,...,18}
\draw[line width=.6pt,color=black!15,dashed] (0-.2,\y) -- (9+.2,\y);
\draw[->] (0-.2,0) -- (9+.4,0) node[right] {$i$};
\draw[->] (0,0-.2) -- (0,18+.4) node[above] {$j$};
\foreach \x in {0,3,6,9}
\draw[shift={(\x,0)},color=black] (0pt,2pt) -- (0pt,-2pt) node[below] {\footnotesize $\x$};
\foreach \y in {0,3,6,9,12,15,18}
\draw[shift={(0,\y)},color=black] (2pt,0pt) -- (-2pt,0pt) node[left] {\footnotesize $\y$};
\foreach \s in {0,...,3} \foreach \t in {0,...,6} \draw [fill=black] (3*\s, 3*\t) circle (4pt);
\foreach \s in {1,...,3} \foreach \t in {0,...,5} \draw [fill=black] (3*\s-1, 3*\t+1) circle (4pt);
\foreach \s in {1,...,3} \foreach \t in {0,...,5} \draw [fill=black] (3*\s-2, 3*\t+2) circle (4pt);
\foreach \s/\t in {0/3, 0/6, 0/12, 0/15, 1/5, 1/14} \draw [color=black, fill=white] (\s, \t) circle (4pt);
\end{tikzpicture}
\caption{$d=3,e=3$}
\end{subfigure}%
\caption{Monomials $s^it^j$ appearing in $\bbk[\Lambda]$ for two choices of $d$ and $e$.}
\end{figure}

\begin{proof}[Proof of Proposition \ref{prop:hilbseriesI2}]
First, note that $I_2 \cong I_v/(v-x_{d-1}^e)$ where $\deg(v)=e$ and $I_v \subseteq S[v]$ is the ideal of $2 \times 2$ minors of:
$$
M_v = \begin{bmatrix}
x_0 & x_1 & \ldots & x_{d-2} & v & y_0 & \ldots & y_{e-2}\\
x_1 & x_2 & \ldots & x_{d-1} & y_0 & y_1 & \ldots & y_{e-1}
\end{bmatrix}.
$$
Since $M_v$ is a $(d+e-1) \times 2$ one-generic matrix, $I_v$ has codimension $d+e-2$ and $S' = S[v]/I_v$ is a Cohen--Macaulay integral domain by \cite[Theorem 6.4]{eGeomOSyz}.  Since $S/I_2 \cong S'/(v-x_{d-1}^e)$, the codimension of $I_2$ is also $d+e-2$.  Then by \cite[Theorem A2.60]{eGeomOSyz}, $S/I_2$ is resolved by the Eagon-Northcott complex of $M$, which allows us to compute the Hilbert series of $S/I_2$. It is worth noting that $I_v$ defines the rational normal scroll $\calS(d-1,e)$ in $\bbP^{d+e}$ (cf. \cite{ghPrinciplesAlgGeom}, \cite{ehVarietiesMinDeg}).

The Eagon--Northcott complex \cite{enIdDefBMatAACComplexAssocWThem} can be used to resolve determinantal ideals of the correct codimension. We follow the exposition in \cite[Appendix~A2H]{eGeomOSyz} and specialize to our needs. Use $EN_k$ to denote the $k$th module in the Eagon--Northcott resolution of $I_2$, defined by the maximal minors of the $2 \times (d+e-1)$-matrix $M$ from \eqref{eq:definingmatrix}. By definition, 
\[
EN_k \coloneqq (\Sym_{k-1} G) \otimes \textstyle{\bigwedge}^{k+1} F
\]
for $k \geq 1$, where $\Sym G$ is the symmetric algebra on the vector space $G$, whose basis elements correspond to the two rows of $M$, and $\textstyle{\bigwedge} F$ is the exterior algebra on the vector space $F$, whose basis elements correspond to the $d+e-1$ columns of $F$. Note then, that $F$ has $d-1$ degree-$1$ basis elements and $e$ degree-$e$ basis elements. 

The rank of $\Sym_{k-1}G$ is $k$, since $M$ has two rows. We count the various basis elements of $\textstyle{\bigwedge}^{k+1}F$ and stratify by degrees: there are $\binom{d-1}{k+1-i} \cdot \binom{e}{i}$ elements of degree $k+1-i + ie$ where $i$ ranges from $0$ to $k+1$. In particular, this means the Betti numbers of $S/I_2$ are
\[
\beta_{k,k+1+i(e-1)}(S/I_2) = k \left( \binom{d-1}{k+1-i} \binom{e}{i}\right) \text{ for } i = 0, \ldots, k+1
\]
and $0$ otherwise, for all $k=1, \ldots, d+e-2$. 

Assembling these Betti numbers into the Hilbert series and including $\beta_{0,0}=1$, we get the desired result up to some mild reorganization: 
\[
HS_{S/I_2}(z) = \frac{1}{(1-z)^d(1-z^e)^e} \left(1 + \sum\limits_{k=1}^{d+e-2} \sum\limits_{i=0}^{k+1} (-1)^k k \binom{d-1}{k+1-i} \binom{e}{i} z^{k+1+i(e-1)}\right).
\]

\end{proof}

\section{Gr\"obner bases}\label{sec:GB}

The matrix minors which define the usual rational normal curve are in fact a Gr\"obner basis for the defining ideal with respect to a graded reverse lexicographic order (\cite{sGrobnerBasesConvexPolytopes, eInitialIdealsVeroneseSubrings}, \cite[Theorem~3.1.1]{cdrKoszulAlgRegularity}).  We will show that the same is true for the minors which define the weighted rational curves.

Let $\bfw$ be the weight vector of all $1$s with the exception of $0$ in the $d$th position, i.e. the vector assigning weight $0$ to $x_{d-1}$ and weight $1$ to all other variables. Take $<_\bfw$ to be the term order which first takes leading forms with respect to $\bfw$, then breaks ties with revlex where $y_0 > \cdots > y_{e-1} > x_0 > \cdots > x_{d-1}$. 

\begin{theorem}
\label{thm:GBforI}
Recall that $I$ is the defining ideal of the weighted rational curve of type $d,e$ and that $M$ is the matrix whose $2 \times 2$ minors generate $I$. For the term order $<_\bfw$ above, the minors of $M$ form a Gr\"obner basis for $I$. 
\end{theorem}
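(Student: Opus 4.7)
The plan is to identify the initial ideal $\init_{<_\bfw}(I)$ explicitly and match Hilbert series. Let $J$ denote the monomial ideal generated by the $<_\bfw$-leading terms of the $2\times 2$ minors of $M$. Automatically $J \subseteq \init_{<_\bfw}(I)$, and since passing to an initial ideal preserves the Hilbert function, an equality $\dim_\bbk(S/J)_a = \dim_\bbk R_a$ in each degree $a$ forces $J = \init_{<_\bfw}(I)$, whence the minors form a Gröbner basis.

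The first step is a short case analysis of the minors, organized by which pair of columns $(i,j)$ is chosen. For minors not involving the anomalous column $\binom{x_{d-1}^e}{y_0}$ (i.e.\ column $d$), both products in the minor have $\bfw$-weight $2$ and the revlex tie-break selects the ``anti-diagonal'' product $M_{2,i}M_{1,j}$ as in the classical rational normal curve argument. For minors that do involve column $d$, one product is divisible by $x_{d-1}^e$ (of $\bfw$-weight $0$) while the other is a product of weight-$1$ entries, so the weight step alone selects the latter. In every case the leading term is a quadratic monomial, and the complete list of such leading terms is $x_ax_b$ for $1\le a\le b\le d-2$, together with $x_ay_\ell$ for $0\le a\le d-2$ and $0\le\ell\le e-1$, together with $y_ay_b$ for $0\le a\le b\le e-2$.

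The second step describes the basis of $S/J$: a monomial $x^\alpha y^\beta$ is not in $J$ precisely when (i) at most one of $x_1,\ldots,x_{d-2}$ appears, only to exponent one, (ii) no $y$-variable coexists with any $x_i$ for $i\le d-2$, and (iii) at most one of $y_0,\ldots,y_{e-2}$ appears, only to exponent one. Stratifying the standard monomials into the four families $x_0^a x_{d-1}^b$, $x_0^a x_k x_{d-1}^b$ with $1\le k\le d-2$, $x_{d-1}^b y_k y_{e-1}^c$ with $0\le k\le e-2$, and $x_{d-1}^b y_{e-1}^c$ with $c\ge 1$, and summing over those of degree $a$, one obtains $\dim_\bbk(S/J)_a = 1 + (d-1)a + e\lfloor a/e\rfloor = da + 1 - (a\bmod e)$, matching $h_R(a)$ from Proposition~\ref{prop:hilbertseriesR}.

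I do not expect a genuinely hard step. The case analysis of leading terms is finite; each case reduces to a trivial weight comparison or a classical revlex calculation, and the monomial count is elementary given the exhaustive classification of standard monomials. The only real care is in the ``boundary'' minors involving column $d-1$ or column $d$, where the weight vector $\bfw$ (rather than pure revlex) gives the cleanest explanation for why the quadratic $x_{d-2}y_0$ beats $x_{d-1}^{e+1}$ and why $y_0y_{\ell-1}$ beats $x_{d-1}^ey_\ell$; making sure the leading-term dictionary is complete and consistent is what turns the Hilbert series check into a conclusion.
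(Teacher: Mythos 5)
Your overall strategy is exactly the paper's: write down the initial ideal $J$ generated by the leading terms of the minors, describe the standard monomials of $S/J$, count them, and match against $h_R(a)$ from Proposition~\ref{prop:hilbertseriesR}. Your stated final list of leading monomials ($x_a x_b$ for $1\le a\le b\le d-2$, $x_a y_\ell$ for $0\le a\le d-2$, $0\le\ell\le e-1$, and $y_a y_b$ for $0\le a\le b\le e-2$) is correct and agrees with the paper, your classification of the standard monomials is correct, and your count $1+(d-1)a+e\lfloor a/e\rfloor = da+1-(a\bmod e)$ is right. So the skeleton of the argument is sound.

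However, the ``short case analysis'' that is supposed to produce that list does not actually produce it, and this is a genuine gap. You claim that for minors not involving column $d$ both products have $\bfw$-weight $2$; this is false whenever $x_{d-1}$ appears (it has weight $0$), e.g.\ the minor $x_0x_2-x_1^2$ from columns $1,2$ when $d=3$. More seriously, you claim revlex selects the anti-diagonal $M_{2,i}M_{1,j}$ in all non-column-$d$ cases, but for the $xy$-minors it selects the \emph{diagonal} $M_{1,i}M_{2,j}=x_{i-1}y_{j-d}$: with $d=3$, $e=2$ the minor from columns $1,4$ is $x_0y_1-x_1y_0$, and revlex with $y_0>y_1>x_0>x_1>x_2$ gives leading term $x_0y_1$, not $x_1y_0$. (The orientation flips because the $y$-variables are larger than the $x$-variables in the revlex order while the $y$-columns sit to the right of the $x$-columns in $M$.) If one followed your stated rule, the $xy$ leading monomials would be $\{x_a y_\ell: 1\le a\le d-1,\ 0\le\ell\le e-2\}\cup\{x_ay_0: 0\le a\le d-2\}$, which includes $x_{d-1}y_\ell$ and omits $x_ay_{e-1}$; that is a different monomial ideal with a different Hilbert function, so the argument would break. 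You should redo the tie-breaking case for $xy$-minors correctly (noting the diagonal is selected); once the list is justified, the rest of your proof goes through and matches the paper.
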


\begin{remark}
The following proof does not rely on Theorem \ref{thm:determinantal}, and in fact offers an alternative proof that $I = I_2$.  We include both proofs since they offer distinct methods which may prove useful in generalizations.
\end{remark}

\begin{proof}

Let $G = \set{g_1, \ldots, g_\ell}$ be the set of $2 \times 2$ minors of $M$, and $J$ be the monomial ideal $J = \ideal{\initial_{<_\bfw} g_1, \ldots, \initial_{<_\bfw} g_\ell}$. Note that $J \subseteq \initial_{<_\bfw}(I_2)$ and $I_2 \subseteq I$.  These inclusions, together with the fact that Hilbert series are unchanged when we pass to an initial ideal (\cite[Theorem~15.26]{eCommAlgWAViewTAlgGeom}), give us the following string of inequalities:
\[
h_{S/J}(a) \geq h_{S/\initial_{<_\bfw}(I_2)}(a) = h_{S/I_2}(a) \geq h_R(a).
\]
We want to prove that $G$ is a Gr\"obner basis for $I$.  In light of the above remark, we will in fact prove instead that $G$ is a Gr\"obner basis for $I_2$ and that $I_2 = I$.  It suffices to show that $HS_{S/J}(z) = HS_{S/\initial_{<_\bfw}(I_2)}(z)$ and that $HS_{S/{I_2}}(z) = HS_R(z)$.  By the above inequalities, we need only to show that $S/J$ and $R$ have the same Hilbert series.

The following table describes the generators of $J$: 
\[
\begin{array}{c|l|l}
g & \text{index range} & \initial_{<_\bfw}(g) \\
\hline 
    x_i x_{j+1} - x_j x_{i+1} & 
    0 \leq i < j \leq d-2 & 
    x_{i+1}x_j \\ 
\hline 
    y_i y_{j+1} - y_j y_{i+1} & 
    0 \leq i < j \leq e-2 &
    y_{i+1}y_j \\
\hline 
    x_j y_{i+1} - y_ix_{j+1} & 
    0 \leq i \leq e-2, \ 0 \leq j \leq d-2 &
    x_j y_{i+1} \\
\hline 
    x_i y_0 - x_{d-1}^e x_{i+1} &
    0 \leq i \leq d-2 &
    x_i y_0 \\
\hline 
    x_{d-1}^e x_{i+1} - y_0y_i &
    0 \leq i \leq e-2 & 
    y_0y_i
\end{array}
\]

We can more concisely display the information of what quadratics appear as generators of $J$ in the following table (note that the table is symmetric, so the entries below the diagonal are redundant information): 
\[
\begin{array}{c|c|c|c|c|c||c|c|c|c|c}
& x_0 & x_1 & \cdots & x_{d-2} & x_{d-1} & y_0 & y_1 & \cdots & y_{e-2} & y_{e-1} \\ 
\hline 
x_0 & & & & & & \checkmark & \checkmark & \checkmark & \checkmark & \checkmark \\
\hline 
x_1 & & \checkmark & \checkmark & \checkmark & & \checkmark & \checkmark & \checkmark & \checkmark & \checkmark \\ 
\hline 
\vdots & & \checkmark & \checkmark & \checkmark & & \checkmark & \checkmark & \checkmark & \checkmark & \checkmark \\
\hline 
x_{d-2} & & \checkmark & \checkmark & \checkmark & & \checkmark & \checkmark & \checkmark & \checkmark & \checkmark \\
\hline 
x_{d-1} & & & & & & & & & & \\
\hline 
\hline 
y_0 & \checkmark & \checkmark & \checkmark & \checkmark & & \checkmark & \checkmark & \checkmark & \checkmark & \\
\hline 
y_1 & \checkmark & \checkmark & \checkmark & \checkmark & & \checkmark & \checkmark & \checkmark & \checkmark & \\ 
\hline 
\vdots & \checkmark & \checkmark & \checkmark & \checkmark & & \checkmark & \checkmark & \checkmark & \checkmark & \\
\hline 
y_{e-2} & \checkmark & \checkmark & \checkmark & \checkmark & & \checkmark & \checkmark & \checkmark & \checkmark & \\
\hline 
y_{e-1} & \checkmark & \checkmark & \checkmark & \checkmark & & & & & \\
\end{array}
\]

\newpage
In other words, the only surviving quadratic monomials in $S/J$ are of the form: 
\begin{itemize}
    \item $x_0x_i$ where $0 \leq i \leq d-1$
    \item $x_ix_{d-1}$ where $0 \leq i \leq d-1$ 
    \item $x_{d-1}y_i$ where $0 \leq i \leq e-1$
    \item $y_i y_{e-1}$ where $0 \leq i \leq e-1$
\end{itemize}

With the above information, we can make the following observations in the interest of counting basis elements of $S/J$ of arbitrary degree. 
\begin{itemize}
    \item Monomials that are purely products of $x$s are of the form $x_0^a x_i x_{d-1}^b$ where $0 \leq i \leq d-1$. 
    \item Monomials that contain both an $x$ and a $y$ are of the form $x_{d-1}^a y_i y_{e-1}^b$ where $0 \leq i \leq e-1$.
    \item Monomials that are purely products of $y$s are of the form $y_iy_{e-1}^a$ where $0 \leq i \leq e-1$. 
\end{itemize}

Finally, we count basis elements toward assembling the Hilbert function of $S/J$. To obtain a monomial of degree $qe+r$, where $0 \leq q$ and $0 \leq r < e$, we need a product of $p$ $y$'s and $(q-p)e+r$ $x$'s, where $0 \leq p \leq q$. If $p=0$, the monomials can be of the form $x_0^ax_ix_{d-1}^b$ where $a+1+b = qe+r$ and $0 \leq i \leq d-1$. There are $(d-2)(qe+r)$ such monomials if $i \neq 0, d-1$ and $qe+r+1$ such monomials if $i=0$ or $d-1$, for a total of $(d-1)(qe+r)+1$ ``purely $x$'' monomials of degree $qe+r$. If $1 \leq p \leq q$, the monomial must have the form $x_{d-1}^{(q-p)e+r} y_i y_{e-1}^{p-1}$ where $0 \leq i \leq e-1$. There are $e$ such monomials for every $p$, leading to $qe$ monomials of degree $qe+r$ divisible by a $y$. Ultimately, we have $(d-1)(qe+r)+1 + qe = d(qe+r) + 1 - r$ monomials of degree $qe+r$. 

This shows that $S/J$ has the same Hilbert function as $R$, which proves both that $I=I_2$ and that $G$ is a Gr\"obner basis for $I$.
\end{proof}

Note that this Gr\"obner basis is not quadratic -- at least not in the usual sense -- so it does not imply that $R$ is Koszul as in the case of the usual rational normal curve.

\section{Koszulness}\label{sec:Koszulness}

The coordinate ring of the usual rational normal curve is Koszul (\cite{sGrobnerBasesConvexPolytopes, eInitialIdealsVeroneseSubrings}, \cite[Theorem~3.1.1]{cdrKoszulAlgRegularity}).  That is, the minimal graded free resolution of $\bbk$ over this ring is linear. In the case of a weighted rational curve, the resolution of $\bbk$ over $R$ cannot possibly be linear, at least in the traditional sense.  For instance, the first steps of the resolution will look like:
\[
0 \gets \bbk \gets R \gets [R(-1)^d \oplus R(-e)^e] \gets \cdots
\]
Nonetheless, it turns out that $R$ satisfies a generalized notion of Koszulness, introduced in \cite{hrwTKoszulPropIAffSemigroupRings}.

\begin{theorem}
\label{thm:Koszul}
The ring $(R,\frakm)$ is nonstandard Koszul.
\end{theorem}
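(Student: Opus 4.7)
The plan is to compute $\grmR$ explicitly and exhibit a quadratic Gr\"obner basis for its defining ideal; by the standard result that a quotient of a polynomial ring by an ideal with a quadratic Gr\"obner basis is Koszul (\cite[Theorem~3.1.1]{cdrKoszulAlgRegularity}), this will finish the proof.

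Write $S$ with its standard $\mathbb{Z}$-grading $\deg x_i = \deg y_j = 1$, so that $\grmR \cong S/I^*$ where $I^* = (f^* : f \in I)$ and $f^*$ denotes the lowest standard-degree homogeneous component of $f$. Let $\tilde{I} \subseteq I^*$ be the subideal generated by $f^*$ as $f$ ranges over the $2 \times 2$ minors of the matrix $M$ from Theorem~\ref{thm:determinantal}. Direct inspection of the five types of minors shows that $\tilde{I}$ is precisely the ideal of $2 \times 2$ minors of the matrix $M'$ obtained from $M$ by replacing its unique non-linear entry $x_{d-1}^e$ by $0$: the minors not involving the $(d-1)$st column of $M$ are already purely quadratic, while those involving it drop the term containing $x_{d-1}^e$.

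To compute the Hilbert function of $\grmR$ in the standard grading, note that $\dim_\bbk(\frakm^k/\frakm^{k+1})$ equals the number of basis monomials of $R$ (i.e., monomials outside $J = \init_{<_\bfw}(I)$) of standard degree exactly $k$. The monomial classification from the proof of Theorem~\ref{thm:GBforI} allows a direct count: separately enumerating pure-$x$, pure-$y$, and mixed monomials by standard degree yields $\dim_\bbk(\grmR)_k = k(d+e-1) + 1$ for $k \geq 1$. On the other hand, under the graded reverse lexicographic order with $y_0 > \cdots > y_{e-1} > x_0 > \cdots > x_{d-1}$, a check against the table in the proof of Theorem~\ref{thm:GBforI} confirms that the initial monomials of the generators of $\tilde{I}$ coincide exactly with the monomial generators of $J$. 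Hence $\init(\tilde{I}) \supseteq J$, giving $\dim_\bbk(S/\tilde{I})_k \leq \dim_\bbk(S/J)_k = k(d+e-1) + 1$.

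Combining this upper bound with the inclusion $\tilde{I} \subseteq I^*$, we obtain $\dim_\bbk(S/\tilde{I})_k = \dim_\bbk(\grmR)_k$ for every $k$, which forces $\tilde{I} = I^*$ and $\init(\tilde{I}) = J$. Thus the $2\times 2$ minors of $M'$ form a Gr\"obner basis for $I^*$ with quadratic monomial initial ideal, and \cite[Theorem~3.1.1]{cdrKoszulAlgRegularity} applied to $S/I^* = \grmR$ yields the desired Koszulness. The most delicate step is the standard-degree count for $\grmR$ in the third paragraph: it must be carried out separately from the nonstandard-degree count in the proof of Theorem~\ref{thm:GBforI}, even though both rest on the same classification of basis monomials.
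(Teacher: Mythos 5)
Your proof is correct, but it takes a genuinely different route from the paper's main argument. The paper proves Theorem~\ref{thm:Koszul} by appealing to \cite[Theorem~5.2]{hrwTKoszulPropIAffSemigroupRings}: Cohen--Macaulayness is already known, and minimal multiplicity follows from reading $e(R)=d+e-1$ off the numerator of $HS_\grmR(z)$ and matching it with $\dim_\bbk\frakm/\frakm^2-\dim R+1$. You instead identify $\grmR\cong S/I_0$ (where $I_0=\tilde I$ is the ideal of $2\times2$ minors of $M_0$) and exhibit a quadratic Gr\"obner basis for $I_0$, then apply the quadratic-Gr\"obner-basis criterion for Koszulness. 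This is in fact exactly the route the paper flags as an \emph{alternative} proof immediately after the main one; your version streamlines it by collapsing the identification of $\grmR$ with $S/I_0$ and the Gr\"obner basis claim into a single three-way Hilbert-function squeeze. The trade-off: the minimal-multiplicity route is shorter because the Herzog--Reiner--Welker theorem does the heavy lifting, whereas your route is more self-contained and yields sharper structural information (an explicit quadratic Gr\"obner basis for the presentation ideal of $\grmR$, not merely Koszulness).

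One point should be spelled out more carefully in your third paragraph. The assertion that $\dim_\bbk(\frakm^k/\frakm^{k+1})$ equals the number of standard monomials of $R$ of standard degree exactly $k$ silently uses the claim that, for each standard monomial $m$, the $\frakm$-adic order $\nu(m)$ equals the number of variables in $m$ as written in $S$. This is not automatic: the relations $x_iy_0=x_{d-1}^e x_{i+1}$ and $y_0y_i=x_{d-1}^e y_{i+1}$ change the number of variable factors, so a priori some standard monomial might sit deeper in the $\frakm$-adic filtration than its obvious factorization suggests. One must verify (as the paper effectively does in its computation of the bigraded $HS_\grmR(z,w)$) that the canonical standard-monomial factorization already maximizes the number of variable factors, e.g.\ by the observation that the minimum number of $y$-factors needed to produce a given monomial of $\bbk[s,t]$ is realized by the canonical form. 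With that filled in, your squeeze argument and the final appeal to \cite[Theorem~3.1.1]{cdrKoszulAlgRegularity} go through.
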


Before defining ``nonstandard Koszul,'' we motivate the definition and result by examining the resolution of $\bbk$ over $R$.  Although not linear, this resolution appears well-behaved upon computational inspection and has a linear part which resolves $\bbk$ over a related ring. 

\begin{example} 

Take $d=3$ and $e=2$. The resolution of $\bbk$ over $R = S/I$ is of the form $R \leftarrow R^5 \leftarrow R^{16} \leftarrow R^{48} \leftarrow \cdots$ (suppressing twists) where the map $R^{16} \rightarrow R^5$ is given by the following matrix:
$$
\setcounter{MaxMatrixCols}{20}
\begin{bmatrix}
-x_1 & -x_2 & -y_0 & -y_1 & 0 & 0 & 0 & 0 & 0&0&0&0 & -y_0 & 0 & 0 & 0\\
x_0 & x_1 & \mathbf{x_2^2} & y_0 & -x_1 & -x_2 & -y_0 & -y_1 & 0&0&0&0& 0 & -y_0 & 0 & 0 \\
0&0&0&0 & x_0 & x_1 & \mathbf{x_2^2} & y_0 & 0&0&0&0 & 0&0& -y_0 & -y_1\\
0&0&0&0&0&0&0&0  & -x_1 & -x_2 & -y_0 & -y_1 & x_0 & x_1 & x_2 &0\\
0&0&0&0&0&0&0&0 & x_0 & x_1 & \mathbf{x_2^2} & y_0 & 0 & 0 & 0 & x_2
\end{bmatrix}.
$$ 

Naively taking only the linear entries of the above maps gives a resolution of $\bbk$ over $R_0 = S/I_0$, where $I_0 = \ideal{x_0x_2-x_1^2, x_0y_0, x_0y_1 - x_1y_0, x_1y_0, x_1y_1 - x_2y_0, y_0^2}$, which is somehow a forced homogenization of $I$ under the standard grading.  
In other words, the resolution of $\bbk$ over $R_0$ also has the form $R_0 \leftarrow R_0^5 \leftarrow R_0^{16} \leftarrow R_0^{48} \leftarrow \cdots$, and the map $R_0^{16} \rightarrow R_0^5$ is given by
$$
\setcounter{MaxMatrixCols}{20}
\begin{bmatrix}
-x_1 & -x_2 & -y_0 & -y_1 & 0 & 0 & 0 & 0 & 0&0&0&0 & -y_0 & 0 & 0 & 0\\
x_0 & x_1 & \mathbf{0} & y_0 & -x_1 & -x_2 & -y_0 & -y_1 & 0&0&0&0& 0 & -y_0 & 0 & 0 \\
0&0&0&0 & x_0 & x_1 & \mathbf{0} & y_0 & 0&0&0&0 & 0&0& -y_0 & -y_1\\
0&0&0&0&0&0&0&0  & -x_1 & -x_2 & -y_0 & -y_1 & x_0 & x_1 & x_2 &0\\
0&0&0&0&0&0&0&0 & x_0 & x_1 & \mathbf{0} & y_0 & 0 & 0 & 0 & x_2
\end{bmatrix}.
$$
This matrix is almost identical to the one above: the only changed entries are designated in bold, which are the ones previously containing $x_2^2$.
However, this new map is linear with respect to the standard grading.
\end{example}

Additionally, we have seen that it is possible to take an initial ideal of $I$ which is ``quadratic'' in the sense that it is generated by twofold products of variables, though these are not degree-$2$ terms in our grading. Taken together, these two observations suggest another instance of Philosophy \ref{phil:multigradedSyz} here.  Namely, the coordinate rings of weighted rational curves satisfy a Koszul property, but to understand this analogue, we need to deform the notion of Koszulness (in the manner of \cite[Definition~5.1]{hrwTKoszulPropIAffSemigroupRings}).

\begin{definition}
Let $(A, \mathfrak{n})$ be a (graded) local ring. The \textit{associated graded algebra} $\gr_\frak{n}(A)$ of $A$ with respect to the ideal $\frak n$ is the graded ring
\[
A/\frak n \oplus \frak n/\frak n^2 \oplus \frak n^2/\frak n^3 \oplus\cdots.
\] 
For any element $f$ of $A$, the \textit{initial form of} $f$, written $\initial_{\frak n}(f)$, is the image of $f$ in $\frak n^j/\frak n^{j+1}$ where $j$ is the greatest number such that $f \in \frak n^j$.  For an ideal $J$ of $A$, $\initial_{\frak n}(J)$ is the ideal of $\gr_\frak{n}(A)$ generated by $\initial_{\frak n}(f)$ for $f \in J$.
\end{definition}

\begin{definition}(\cite[cf.~Definition~5.1]{hrwTKoszulPropIAffSemigroupRings})
A ring $A$ which is either Noetherian local or $\bbN^d$-graded is \textit{nonstandard Koszul} if its associated graded ring $\gr_{\frak n} A$ is Koszul in the usual sense.
\end{definition} 

Note that if $A$ is standard graded, then $A$ is nonstandard Koszul if and only if $A$ is Koszul.  This is due to the fact that $\gr_{\frak n} A \cong A$.

As a first example where $A$ is nonstandard graded, suppose $A$ is any $\mathbb Z$-graded polynomial ring.  In this case, $\gr_{\frak n}(A)$ is a standard graded polynomial ring, so $A$ is nonstandard Koszul.

\begin{example}
    Let $A = \bbk[x,y]/\langle y^2 -xy^2 -x^4 \rangle$ where $\deg(x) = 1$ and $\deg(y) = 2$.  Then $\gr_{\frak n}(A) \cong \bbk[x,y]/\langle y^2 \rangle$ where now $\deg(x) = \deg(y) = 1$.  Because $\gr_{\frak n}(A)$ is Koszul, $A$ is nonstandard Koszul.
\end{example}

\begin{theorem}
The $\bbN^2$-graded Hilbert series of $\grmR$ induced by the natural $\bbN$-grading on $\grmR$ and the weighted $\bbN$-grading on $R$ where $\deg(x_i)=1$ and $\deg(y_i)=e$ is 
\[
HS_\grmR(z,w) = \frac{1+(d-2)wz + (e-1)w^ez - (d+e-2)w^{e-1}z^2}{(1-mz)^2(1-m^ez)}.
\]
Furthermore, the usual $\bbN$-graded Hilbert series of $\grmR$ is
\[
HS_\grmR(z) = \frac{1+(d+e-2)z}{(1-z)^2}. 
\]
\end{theorem}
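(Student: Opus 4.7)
The approach is to compute the bigraded Hilbert function of $\grmR$ directly from the semigroup realization $R = \bbk[\Lambda]$ used in the proof of Proposition~\ref{prop:hilbertseriesR}, then assemble the result into rational form. The key step is to determine, for each lattice point $s^it^j \in \Lambda$ of weighted degree $a$ (so $i+j = ad$), its $\frakm$-adic order $\nu(s^it^j)$---equivalently, the maximum number of factors in a decomposition $s^it^j = g_1 \cdots g_n$ into semigroup generators. Because each weighted-degree-$e$ generator ``costs'' more weighted degree per factor, maximizing $n$ amounts to using as few weighted-degree-$e$ generators as possible, subject to the $s$-exponent feasibility constraints: the $d$ weighted-degree-$1$ generators have $s$-exponents in $\{1,\dots,d\}$, and the $e$ weighted-degree-$e$ generators have $s$-exponents in $\{0,\dots,e-1\}$. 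A careful combinatorial analysis yields $\nu(s^it^j) = a - (e-1)\lceil(a-i)/e\rceil$.

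With this formula, count each bigraded piece of $\grmR$: the diagonal piece of bidegree $(a,a)$ has dimension $a(d-1)+1$ (from the monomials $s^it^{ad-i}$ with $i \geq a$, whose Hilbert series is that of a classical rational normal curve of degree $d-1$), while each off-diagonal piece of bidegree $(a - k(e-1), a)$ for $1 \leq k \leq \lfloor a/e \rfloor$ has dimension exactly $e$, corresponding to the $e$ values $i \in \{a-ke, \dots, a-(k-1)e-1\}$; all other pieces vanish. A total-sum check recovers $h_R(a) = da+1-(a\bmod e)$ from Proposition~\ref{prop:hilbertseriesR}. Summing the two families of contributions gives
\[
HS_{\grmR}(z,w) = \frac{1+(d-2)wz}{(1-wz)^2} + \frac{ewz^e}{(1-wz)(1-wz^e)},
\]
where the second summand is obtained by swapping the order of summation in $\sum_a \sum_k$. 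Placing these over the common denominator $(1-wz)^2(1-wz^e)$ and collecting terms yields the claimed bigraded formula.

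The $\bbN$-graded formula is obtained by specializing $z=1$: the numerator collapses to $1+(d+e-3)w-(d+e-2)w^2 = (1-w)(1+(d+e-2)w)$, cancelling one factor of $(1-w)$ from the denominator $(1-w)^3$ and leaving $\frac{1+(d+e-2)w}{(1-w)^2}$. The main technical obstacle is establishing the formula $\nu(s^it^j) = a - (e-1)\lceil(a-i)/e\rceil$: both the upper bound (no decomposition of greater length exists) and the lower bound (a decomposition of this length is achievable) require integer-optimization arguments, with achievability hinging on the observation that both generator families span all intermediate $s$-exponents in their respective ranges, allowing one to construct the claimed decomposition.
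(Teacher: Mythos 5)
Your proposal takes essentially the same approach as the paper: both count monomial bases of the bigraded pieces of $\grmR$ from the semigroup/monomial realization, stratify identically into a ``pure-$x$'' diagonal family (of size $(d-1)u+1$ in $\grm$-order $u$) and a mixed family of size $e$ in each off-diagonal bidegree, and assemble these via geometric series into the rational form, specializing one variable to get the single-graded version. The one thing you add is the explicit formula for the $\frakm$-adic order, $\nu(s^it^j)=a-(e-1)\lceil(a-i)/e\rceil$, which the paper handles only with a brief informal remark (``to maximize the number of variables one must maximize the number of $x$'s''); this is a nice clarifying lemma, though note it needs a $\max(0,\cdot)$ wrapped around the ceiling (for $i>a$ the ceiling goes negative and the raw formula would exceed $a$), and you explicitly flag its proof as incomplete---which is fair, since both the upper and lower bounds do require the integer-feasibility argument you sketch. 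Your displayed intermediate formula has the roles of $z$ and $w$ swapped relative to the paper's convention (you write $wz^e$ where the paper has $w^ez$, and you specialize $z=1$ where the paper sets $w=1$), but it is internally consistent; be aware the theorem statement in the source itself contains typos ($m$ for $w$ in the denominator, and $w^{e-1}z^2$ where the paper's own derivation yields $w^{e+1}z^2$), so matching that exact display was not possible anyway.
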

\begin{proof}
The basis elements for $\grmR_u$ are monomials where $u$ is the maximal number of variables $x$ and $y$ appearing in a factorization of that monomial. Note that, to maximize the number of variables used, one must maximize the number of $x$'s appearing, since $x$'s have degree $1$. 

Monomials using $u$ variables must use $i$ $y$'s and $u-i$ $x$'s. For $i=0$, i.e. the monomials that can be ``maximally'' written entirely as a product of $u$ $x$'s, there are $(d-1)u+1$ such monomials. A monomial that is a product of $u$ $x$'s has weighted degree $u$, so this contributes $((d-1)u+1)w^u$ to the coefficient of $z^u$ in $HS_\grmR(z,w)$. For $1 \leq i \leq u$, there are $e$ monomials using $i$ $y$'s and $u-i$ $x$'s, each of weighted degree $ie+(u-i)$, contributing $e(w^{u+i(e-1)})$ to the coefficient of $z^u$ in $HS_\grmR(z,w)$.
Therefore, the coefficient of $z^u$ in $HS_\grmR(z,w)$ is 
\[
[z^u] HS_\grmR(z,w) = ((d-1)u+1)w^u + \sum\limits_{i=1}^u e w^{u+i(e-1)}.
\]

The bigraded Hilbert series of $\grmR$ therefore can be computed: 
\[
\begin{aligned}
HS_\grmR(z,w) &= \sum\limits_{u=0}^\infty ((d-1)u+1)w^uz^u + \sum\limits_{u=0}^\infty e \left( \sum\limits_{i=1}^u w^{(e-1)i}\right)(wz)^u \\
&= (d-1)\sum\limits_{u=0}^\infty u(wz)^u + \frac{1}{1-wz} + e \sum\limits_{u=0}^\infty \frac{m^{e-1}(1-w^{(e-1)u}}{1-mw^{e-1}} (wz)^u \\ 
&= \frac{(d-1)wz}{(1-wz)^2} + \frac{1-wz}{(1-wz)^2} + \frac{ew^{e-1}}{1-w^{e-1}} \left(\frac{1}{1-wz} - \frac{1}{1-w^ez}\right) \\ 
&= \frac{1+(d-2)wz}{(1-wz)^2} + \frac{ew^{e-1}}{1-w^{e-1}} \cdot \frac{wz(1-w^{e-1})}{(1-wz)(1-w^ez)} \\ 
&= \frac{(1+(d-2)wz)(1-w^ez)+ew^{e}z(1-wz)}{(1-wz)^2(1-w^ez)}\\
&=\frac{1+(d-2)wz+(e-1)w^ez-(d+e-2)w^{e+1}z^2}{(1-wz)^2(1-w^ez)}.
\end{aligned}
\]

To obtain the usual $\mathbb N$-graded Hilbert series of $\grmR$, we can evaluate $HS_\grmR(z,w)$ at $w=1$:
\[
\begin{aligned}
HS_\grmR(z) &= \left[HS_\grmR(z,w)\right]_{w \mapsto 1}\\
&= \frac{1+(d+e-2)z}{(1-z)^2}.
\end{aligned}
\]

Note, too, that $[HS_\grmR(z,w)]_{z \mapsto 1}$ yields $HS_R(w)$ from Proposition~\ref{prop:hilbertseriesR}, as expected.
\end{proof}

\begin{figure}
\centering
\begin{subfigure}{.5\textwidth}
  \centering
\begin{tikzpicture}[scale=0.5]


\foreach \x in {0,...,9}
\draw[line width=.6pt,color=black!15,dashed] (\x,0-.2) -- (\x,18+.2);
\foreach \y in {0,...,18}
\draw[line width=.6pt,color=black!15,dashed] (0-.2,\y) -- (9+.2,\y);
\draw[->] (0-.2,0) -- (9+.4,0) node[right] {$i$};
\draw[->] (0,0-.2) -- (0,18+.4) node[above] {$j$};

\foreach \x in {0,3,6,9}
\draw[shift={(\x,0)},color=black] (0pt,-6pt) -- (0pt,-10pt) node[below] {\footnotesize $\x$};
\foreach \y in {0,3,6,9,12,15,18}
\draw[shift={(0,\y)},color=black] (-6pt,0pt) -- (-10pt,0pt) node[left] {\footnotesize $\y$};


\foreach \s in {0,...,3} \foreach \t in {0,...,6} \draw [fill=black] (3*\s, 3*\t) circle (4pt);

\foreach \s in {1,...,3} \foreach \t in {0,...,5} \draw [fill=black] (3*\s-1, 3*\t+1) circle (4pt);

\foreach \s in {1,...,3} \foreach \t in {0,...,5} \draw [fill=black] (3*\s-2, 3*\t+2) circle (4pt);

\foreach \s/\t in {0/3, 0/9, 0/15} \draw [color=black, fill=white] (\s, \t) circle (4pt);



\draw [thick] (3.5,0) arc (415:225:0.35);
\draw [thick] (3.05,-0.55) -- (0.5,2); 
\draw [thick] (1.5,2) -- (3.5,0);


\path[draw, -] (0.5,2) [bend right=15] to (0.5,5);
\path[draw, -] (1.5,2) [bend left=15] to (1.5,5);

\draw [thick] (0.5,5)--(-0.5,6); 
\draw [thick] (1.5,5) -- (0,6.5);
\draw [thick] (0,6.5) arc (45:225:0.35);


\draw [thick] (6.5,0) arc (415:225:0.35);
\draw [thick] (6.5,0) -- (2.5,4);
\draw [thick] (6.05,-0.55) -- (1.5,4);

\path[draw, -] (1.5,4) [bend right=15] to (1.5,7);
\path[draw, -] (2.5,4) [bend left=15] to (2.5,7);

\draw [thick] (2.5,7) -- (1.5,8);
\draw [thick] (1.5,7) -- (0.5,8);

\path[draw, -] (0.5,8) [bend right=15] to (0.5,11);
\path[draw, -] (1.5,8) [bend left=15] to (1.5,11);

\draw [thick] (0,12.5) arc (45:225:0.35);
\draw [thick] (1.5,11) -- (0, 12.5);
\draw [thick] (0.5,11) -- (-0.5,12);


\draw [thick] (9.5,0) arc (415:225:0.35);
\draw [thick] (9.5,0) -- (3.5,6);
\draw [thick] (9.05,-0.55) -- (2.5,6);

\path [draw, -] (2.5,6) [bend right=15] to (2.5,9);
\path [draw, -] (3.5,6) [bend left=15] to (3.5,9);

\draw [thick] (3.5,9) -- (2.5,10);
\draw [thick] (2.5,9) -- (1.5,10);

\path [draw, -] (2.5,10) [bend left=15] to (2.5,13);
\path [draw, -] (1.5,10) [bend right=15] to (1.5,13);

\draw [thick] (2.5,13) -- (1.5,14);
\draw [thick] (1.5,13) -- (0.5,14);

\path [draw, -] (1.5,14) [bend left=15] to (1.5,17);
\path [draw, -] (0.5,14) [bend right=15] to (0.5,17);

\draw [thick] (0,18.5) arc (45:225:0.35);
\draw [thick] (1.5,17) -- (0, 18.5);
\draw [thick] (0.5,17) -- (-0.5,18);

\end{tikzpicture}
\caption{$d=3,e=2$}
\end{subfigure}%
\begin{subfigure}{.5\textwidth}
  \centering
\begin{tikzpicture}[scale=0.5]
\foreach \x in {0,...,9}
\draw[line width=.6pt,color=black!15,dashed] (\x,0-.2) -- (\x,18+.2);
\foreach \y in {0,...,18}
\draw[line width=.6pt,color=black!15,dashed] (0-.2,\y) -- (9+.2,\y);
\draw[->] (0-.2,0) -- (9+.4,0) node[right] {$i$};
\draw[->] (0,0-.2) -- (0,18+.4) node[above] {$j$};

\foreach \x in {0,3,6,9}
\draw[shift={(\x,0)},color=black] (0pt,-6pt) -- (0pt,-10pt) node[below] {\footnotesize $\x$};
\foreach \y in {0,3,6,9,12,15,18}
\draw[shift={(0,\y)},color=black] (-6pt,0pt) -- (-10pt,0pt) node[left] {\footnotesize $\y$};


\foreach \s in {0,...,3} \foreach \t in {0,...,6} \draw [fill=black] (3*\s, 3*\t) circle (4pt);
\foreach \s in {1,...,3} \foreach \t in {0,...,5} \draw [fill=black] (3*\s-1, 3*\t+1) circle (4pt);
\foreach \s in {1,...,3} \foreach \t in {0,...,5} \draw [fill=black] (3*\s-2, 3*\t+2) circle (4pt);
\foreach \s/\t in {0/3, 0/6, 0/12, 0/15, 1/5, 1/14} \draw [color=black, fill=white] (\s, \t) circle (4pt);



\draw [thick] (3.5,0) arc (415:225:0.35);
\draw [thick] (3.05,-0.55) -- (0.5,2); 
\draw [thick] (1.5,2) -- (3.5,0);

\path[draw, -] (0.5,2) [bend right=15] to (1.5,7);
\path[draw, -] (1.5,2) [bend left=15] to (2.5,7);

\draw [thick] (0,9.5) arc (45:225:0.35);
\draw [thick] (2.5,7) -- (0,9.5);
\draw [thick] (1.5,7) -- (-0.5,9);


\draw [thick] (6.5,0) arc (415:225:0.35);
\draw [thick] (6.5,0) -- (2.5,4);
\draw [thick] (6.05,-0.55) -- (1.5,4);

\path[draw, -] (1.5,4) [bend right=15] to (2.5,9);
\path[draw, -] (2.5,4) [bend left=15] to (3.5,9);

\draw [thick] (2.5,9) -- (0.5,11);
\draw [thick] (3.5,9) -- (1.5,11);

\path[draw, -] (0.5,11) [bend right=15] to (1.5,16);
\path[draw, -] (1.5,11) [bend left=15] to (2.5,16);

\draw [thick] (0,18.5) arc (45:225:0.35);
\draw [thick] (1.5,16) -- (-0.5, 18);
\draw [thick] (2.5,16) -- (0,18.5);


\draw [thick] (9.5,0) arc (415:225:0.35);
\draw [thick] (9.5,0) -- (3.5,6);
\draw [thick] (9.05,-0.55) -- (2.5,6);

\path [draw, -] (2.5,6) [bend right=15] to (3.5,11);
\path [draw, -] (3.5,6) [bend left=15] to (4.5,11);

\draw [thick] (3.5,11) -- (1.5,13);
\draw [thick] (4.5,11) -- (2.5,13);

\path [draw, -] (1.5,13) [bend right=15] to (2.5,18);
\path [draw, -] (2.5,13) [bend left=15] to (3.5,18);

\draw [thick] (2.5,18) -- (2,18.5);
\draw [thick] (3.5,18) -- (3,18.5);

\end{tikzpicture}
\caption{$d=3,e=3$}
\end{subfigure}%
\caption{Monomial bases $s^it^j$ for the graded components $\grmR_1$, $\grmR_2$, and $\grmR_3$ for two choices of $d$ and $e$.}
\end{figure}

It turns out that, like $R$, the associated graded algebra $\grmR$ can be described as a quotient of $S$ by a determinantal ideal.  Let $I_0$ be the ideal generated by the $2\times 2$ minors of 
$$
M_0 = \begin{bmatrix}
x_0 & x_1 & \ldots & x_{d-2} & 0 & y_0 & \ldots & y_{e-2}\\
x_1 & x_2 & \ldots & x_{d-1} & y_0 & y_1 & \ldots & y_{e-1}
\end{bmatrix}.
$$

\begin{theorem}
The associated graded algebra $\grmR$ is isomorphic to $S/I_0$, where $S$ has the standard grading.
\end{theorem}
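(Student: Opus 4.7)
The plan is to identify $I_0$ with the initial ideal $\init_\frakm(I) \subseteq S$, where the initial form of an element of $I$ is its lowest-standard-degree component with respect to the standard grading $\deg(x_i) = \deg(y_i) = 1$; then $\grmR \cong S/\init_\frakm(I)$ as standard-graded rings, and the theorem follows. I would split the argument into an inclusion of ideals and a Hilbert series comparison.

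First, I would verify $I_0 \subseteq \init_\frakm(I)$ by examining the $2 \times 2$ minors of $M$. Minors that do not involve the distinguished column $(x_{d-1}^e, y_0)^T$ are already homogeneous of standard degree $2$ and equal the corresponding minors of $M_0$. The remaining minors have the form $x_i y_0 - x_{i+1} x_{d-1}^e$ or $x_{d-1}^e y_j - y_0 y_{j-1}$, whose lowest-standard-degree components are $x_i y_0$ and $-y_0 y_{j-1}$ respectively, which are exactly the corresponding minors of $M_0$. This yields a standard-graded surjection $S/I_0 \twoheadrightarrow S/\init_\frakm(I) \cong \grmR$, so it suffices to show the two rings have equal Hilbert series.

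Next, I would compute $HS_{S/I_0}(z)$ by reusing the flat degeneration from the proof of Proposition~\ref{prop:hilbseriesI2}, but now assigning the auxiliary variable $v$ standard degree $1$. Let $I_v \subseteq S[v]$ be the ideal of $2 \times 2$ minors of $M_v$, whose entries are then all standard linear forms. Since $M_v$ is $1$-generic (as recorded in the proof of Proposition~\ref{prop:hilbseriesI2}), $I_v$ defines the Cohen--Macaulay rational normal scroll $\calS(d-1, e) \subseteq \bbP^{d+e}$, and the Eagon--Northcott complex gives $HS_{S[v]/I_v}(z) = \frac{1+(d+e-2)z}{(1-z)^3}$ in this standard grading. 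Because $S[v]/I_v$ is a domain and $v \notin I_v$ (the minors are quadratic), $v$ is a nonzerodivisor, so $HS_{S[v]/(I_v+(v))}(z) = (1-z) HS_{S[v]/I_v}(z) = \frac{1+(d+e-2)z}{(1-z)^2}$. Setting $v \mapsto 0$ in the generators of $I_v$ recovers precisely the generators of $I_0$, so $S[v]/(I_v + (v)) \cong S/I_0$; comparison with the formula for $HS_\grmR(z)$ in the preceding theorem then forces the surjection above to be an isomorphism.

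The obstacle worth flagging is that $M_0$ itself is \emph{not} $1$-generic --- the covector $(1,0)$ paired with the column $(0, y_0)^T$ produces a vanishing generalized entry --- so one cannot directly apply the Eagon--Northcott complex to $M_0$ to read off its Hilbert series. Lifting to the $1$-generic matrix $M_v$ over $S[v]$ and killing the linear form $v$ is what sidesteps this; absent the lift, one would instead be forced to produce an explicit monomial basis for $S/I_0$, a task obscured by the cancellation phenomena visible in the bigraded Hilbert series of $\grmR$ computed just above the statement.
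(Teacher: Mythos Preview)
Your proposal is correct and follows essentially the same approach as the paper: both reduce to showing $I_0 = \init_\frakm(I)$, verify the inclusion $I_0 \subseteq \init_\frakm(I)$ by inspecting initial forms of the minors of $M$, and then establish equality by computing $HS_{S/I_0}(z)$ via the scroll $S' = S[v]/I_v$ (using that $v$ is a nonzerodivisor on this domain) and comparing with the already-computed $HS_\grmR(z)$. Your remark that $M_0$ fails to be $1$-generic, making the lift to $M_v$ essential, is a nice clarification not spelled out in the paper.
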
 
\begin{proof}
We know that $\grm (S/I) = \grm S/\initial_{\frakm}(I)$.  We also know that $\grm S$ is $S$ with the standard grading.  Therefore, we need to show that $S/\initial_{\frak m}(I) = S/I_0$.

We will first argue that $I_0 \subseteq \initial_{\frak m}(I)$. It suffices to show that each generator of $I_0$ is the initial form of an element of $I$.  In fact, it turns out that every generator of $I_0$ is the initial form of a minor of $M$.  The initial forms of these minors are:
\begin{itemize}
    \item $\initial_{\frak m}(x_ix_{j+1}-x_jx_{i+1}) = x_ix_{j+1}-x_jx_{i+1}$,
    \item $\initial_{\frak m}(x_iy_{j+1}-y_jx_{i+1}) = x_iy_{j+1}-y_jx_{i+1}$,
    \item $\initial_{\frak m}(y_iy_{j+1}-y_jy_{i+1}) = y_iy_{j+1}-y_jy_{i+1}$,
    \item $\initial_{\frak m}(x_iy_0-x_{d-1}^ex_{i+1}) = x_iy_0$, and
    \item $\initial_{\frak m}(x_{d-1}^ey_{i+1}-y_0y_i) = -y_0y_i$.
\end{itemize}
Each generator of $I_0$ is of one of these forms, so $I_0\subseteq \initial_{\frak m}(I)$.  This means that we have a surjection $S/I_0 \twoheadrightarrow S/\initial_{\frak m}(I)$.  To see that this is in fact an isomorphism, we will argue that $S/I_0$ and $S/\initial_{\frak m}(I)$ have the same Hilbert series.

To compute $HS_{S/I_0}(z)$, consider $S' = S[v]/I_v$ where $S[v]$ is now standard graded, and $I_v$ is again the ideal generated by $2 \times 2$-minors of $M_v$. We will make use of the isomorphism $S/I_0 \cong S'/\ideal{v}$. Since $S'$ is an integral domain, we have the following short exact sequence:
$$
0 \longleftarrow S/I_0 \longleftarrow S' \longleftarrow S' \longleftarrow 0,
$$
where the rightmost map is multiplication by $v$. Since Hilbert series are additive over short exact sequences, we have $HS_{S/I_0}(z) = (1-z)HS_{S'}(z)$.

We know the Hilbert series of $S'$ since $S'$ is the coordinate ring of a rational normal scroll (\cite[Proposition 3.6]{msTowardFreeResOScrolls}):
$$
HS_{S'}(z) = \frac{1+(d+e-2)z}{(1-z)^3}.
$$
Then we have
$$
HS_{S/I_0}(z) = \frac{1+(d+e-2)z}{(1-z)^2},
$$
which is the Hilbert series of $\grmR$.
\end{proof}

We recall some further definitions from \cite[p. 56]{hrwTKoszulPropIAffSemigroupRings}.

\begin{definition}
Let $A$ be a Noetherian standard $\mathbb N$-graded $\bbk$-algebra.  The \textit{multiplicity} of $A$ is $e(A) \coloneqq P(1)$ where the Hilbert series of $A$ is written as $HS_{A}(z) = \frac{P(z)}{Q(z)}$ with $\gcd(P(z),Q(z)) = 1$.  For an $\mathbb N^d$-graded ring $(A,\mathfrak{n})$, the \textit{multiplicity} of $A$, $e(A)$, is defined to be $e(\gr_\mathfrak{n}(A))$. By a result from \cite{aLocalRingsHighEmbDim}, $e(A) \geq \dim_\bbk \mathfrak{n}/\mathfrak{n}^2-\dim A + 1$ if $A$ is Cohen--Macaulay.  If equality holds, then $A$ has \textit{minimal multiplicity}.
\end{definition}

\begin{proof}[Proof of Theorem \ref{thm:Koszul}]
We will apply Theorem 5.2 of \cite{hrwTKoszulPropIAffSemigroupRings}. We know $R$ is Cohen--Macaulay, so we need to show it has minimal multiplicity.  We have
$$
HS_\grmR (z) = \frac{1+(d+e-2)z}{(1-z)^2}.
$$
In the notation from above, $P(z) = 1+(d+e-2)z$, so $e(R) = d+e-1$. One can check that the rank of the Jacobian $J$ is $0$ at the origin and use $\dim_\bbk \frakm/\frakm^2 = (d+e) - \rank J|_0$ (\cite[Theorem 5.1]{hartshorne1977graduate}) to see that $\dim_\bbk \frakm/\frakm^2= d+e$. Therefore, we have
$$
\dim_{\bbk} \frakm/\frakm^2-\dim R + 1 = (d+e)-2+1 = d+e-1.
$$
So $R$ has minimal multiplicity, and therefore is nonstandard Koszul by \cite[Theorem~5.2]{hrwTKoszulPropIAffSemigroupRings}.
\end{proof}

We note that there is an alternative proof of Theorem \ref{thm:Koszul} that involves finding a quadratic Gr\"obner basis for $I_0$.

\begin{proposition}
    The minors of $M_0$ form a Gr\"obner basis for $I_0$ with respect to the term order $<_\bfw$ described above. 
\end{proposition}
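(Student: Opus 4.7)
The plan is to follow the template of the proof of Theorem~\ref{thm:GBforI}, but carry out the Hilbert function comparison in the \emph{standard} grading on $S$ (in which every variable has degree $1$) rather than the weighted grading. Let $G_0$ denote the set of $2\times 2$ minors of $M_0$ and set $J_0 = \ideal{\initial_{<_\bfw}(g) : g \in G_0}$. By construction, $J_0 \subseteq \initial_{<_\bfw}(I_0)$, so it suffices to show that $h_{S/J_0}(n) = h_{S/I_0}(n)$ under the standard grading; equality of these Hilbert functions combined with the containment forces $J_0 = \initial_{<_\bfw}(I_0)$, which is exactly the Gr\"obner basis condition for $G_0$.

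First I would enumerate the $2\times 2$ minors of $M_0$ and compute their $<_\bfw$-initial terms. The minors fall into the same five families as in the $M$-case, with the sole difference that the minors involving the $d$\th\ column (now containing the $0$ entry) simplify directly to the monomials $x_i y_0$ for $0 \le i \le d-2$ and $-y_0 y_i$ for $0 \le i \le e-2$. A short check using the analysis in the table of Theorem~\ref{thm:GBforI} shows that the $<_\bfw$-initial terms of the remaining three families are identical to those in the $M$-case (in particular, the terms $x_{d-1}^e x_{i+1}$ that were eliminated by the weight $\bfw(x_{d-1}) = 0$ in Theorem~\ref{thm:GBforI} are already gone from the $M_0$ minors). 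Consequently $J_0$ coincides with the monomial ideal $J$ from that proof.

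Next I would observe that $I_0$ is generated by differences of quadratic monomials, and so is homogeneous under the standard grading on $S$; the initial ideal $\initial_{<_\bfw}(I_0)$ is therefore also standard-homogeneous and shares the same standard Hilbert function as $S/I_0$. Under the standard grading, $S/I_0 \cong \grmR$, whose Hilbert series $\tfrac{1+(d+e-2)z}{(1-z)^2}$ gives $h_{\grmR}(n) = (d+e-1)n + 1$. On the other hand, the classification of monomials surviving modulo $J = J_0$ from the proof of Theorem~\ref{thm:GBforI} allows a direct count at each standard degree $n$: there are $(d-1)n + 1$ monomials purely in the $x$'s (of the form $x_0^a x_i x_{d-1}^b$) and $en$ monomials involving at least one $y$ (of the form $x_{d-1}^a y_i y_{e-1}^b$, with $a=0$ covering the purely $y$ case), giving $(d+e-1)n + 1$ in total. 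Matching the two Hilbert functions completes the proof.

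The main ``obstacle'' is conceptual rather than computational: one must carefully swap the weighted grading used throughout the proof of Theorem~\ref{thm:GBforI} for the standard grading, since the relevant target Hilbert function here is that of $\grmR$ rather than $R$. Fortuitously, the surviving monomials modulo $J$ are the same set regardless of grading, and the count $(d+e-1)n+1$ matches the standard Hilbert function of $\grmR$ on the nose, so no genuinely new bookkeeping is required.
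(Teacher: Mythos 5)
Your proof is correct and reaches the same conclusion, but it takes a genuinely different route at the Hilbert-series comparison step. Both arguments begin by observing that $J_0 = J$, so that the problem reduces to showing $S/J_0$ and $S/I_0$ have the same Hilbert function. The paper then stays entirely in the \emph{weighted} grading: it chains $HS_{S/J_0} = HS_{S/I}$ (using that $G$ is a Gr\"obner basis for $I$, from Theorem~\ref{thm:GBforI}) together with $HS_{S/I} = HS_{S/I_0}$ (using that both $S/I$ and $S/I_0$ are resolved by Eagon--Northcott complexes whose terms have identical weighted degrees, since $x_{d-1}^e$ and $0$ sit in a column of the same weighted degree $e$). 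You instead switch to the \emph{standard} grading, which is available because $I_0$ is standard-homogeneous, perform a fresh count of the surviving monomials modulo $J_0$ in standard degree $n$ to get $(d+e-1)n+1$, and compare directly against the standard Hilbert series of $S/I_0 \cong \grmR$ computed earlier in the section. Your route avoids invoking the Eagon--Northcott/Betti-number comparison at this step (though it is still implicit in the earlier theorem $\grmR \cong S/I_0$) and does not need Theorem~\ref{thm:GBforI} itself as a black box, at the small cost of redoing the monomial count in a different grading; the paper's route is a bit more economical, reusing the weighted count from Theorem~\ref{thm:GBforI} verbatim. Both are sound.
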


\begin{proof}
    Note that whether a set of generators is a Gr\"obner basis does not depend on grading, so we can consider $I$ and $I_0$ as ideals of the same ring.  As in Theorem \ref{thm:GBforI}, let $G = \set{g_1, \ldots, g_\ell}$ be the set of $2 \times 2$ minors of $M$, and let $J = \ideal{\initial_{<_\bfw} g_1, \ldots, \initial_{<_\bfw} g_\ell}$.  Also let $H = \{h_1,\ldots,h_l\}$ be the set of $2\times 2$ minors of $M_0$, and write $J_0 = \ideal{\initial_{<_\bfw} h_1, \ldots, \initial_{<_\bfw} h_\ell}$.

    We know that $J_0 \subseteq \initial_{<_\bfw}(I_0)$.  To prove that these ideals are in fact equal, we will argue that they have the same Hilbert series.  First, note that $J_0 = J$, so these ideals have the same Hilbert series.  Since $G$ is a Gr\"obner basis for $I$, $J = \initial_{<_\bfw}(I)$, so $J_0$ and $I$ have the same Hilbert series.

    Using notation from the proof of Proposition \ref{prop:hilbseriesI2}, we have $S/I_0\cong S'/\langle v\rangle$, and $I_0$ has codimension $d+e-2$.  Then by \cite[Theorem A2.60]{eGeomOSyz}, $S/I_0$ is resolved by the Eagon-Northcott complex of $M_0$.  This means that $I$ and $I_0$ have the same Betti numbers, and so the same Hilbert series.  Therefore, $J_0$ and $\initial_{<_\bfw}(I_0)$ have the same Hilbert series.
\end{proof}

As a corollary of Theorem \ref{thm:Koszul}, we get the graded (and therefore ungraded) Poincar\'e series of $R$.

\begin{definition}
Let $M$ be an $\bbN^d$-graded $A$-module with Betti numbers $\beta_{i,\alpha}$. The $\bbN^d$-graded Poincar\'e series of $M$ is 
\[\Poin_M^A(z,w_1, \ldots, w_d) = \sum\limits_{i=0}^\infty \sum\limits_{\alpha \in \bbN^d} \beta_{i,\alpha} w_1^{\alpha_1} \cdots w_d^{\alpha_d} z^i,\]
and the ungraded Poincar\'e series of $M$ is 
\[
\Poin_M^A(z) = \sum\limits_{i=0}^\infty \beta_i z^i.
\]
\end{definition}

\begin{corollary}\label{cor:gradedpoincare}
The graded Poincar\'e series of $R$ is 
\[\Poin^R_\bbk(z,w) = \dfrac{(1+wz)^2(1+w^ez)}{1-(d-2)wz-(e-1)w^ez-(d+e-2)w^{e+1}z^2},\]
and therefore the ungraded Poincar\'e series of $R$ is 
\[\Poin^R_\bbk(z) = \frac{(1+z)^2}{1-(d+e-2)z}.\]
\end{corollary}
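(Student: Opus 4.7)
The plan is to compute the Poincaré series of $\bbk$ over $R$ in two stages: first, to transfer the computation to the associated graded ring $\grmR$ using the nonstandard Koszulness proved in Theorem~\ref{thm:Koszul}, and then to apply the classical Fröberg formula to $\grmR$, whose bigraded Hilbert series has just been computed.

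For the first step, the Avramov--Serre spectral sequence $E_1 = \Tor^{\grmR}_\bullet(\bbk,\bbk) \Rightarrow \Tor^R_\bullet(\bbk,\bbk)$ always produces Serre's coefficient-wise inequality $\Poin^R_\bbk(z) \leq \Poin^{\grmR}_\bbk(z)$, with equality holding exactly when the spectral sequence degenerates at $E_1$. Koszulness of $\grmR$ provides this degeneration, and since the $\frakm$-adic filtration is compatible with the weighted $\bbN$-grading on $R$ (the generators $x_i$ and $y_j$ are homogeneous in both gradings), the identification persists bigradedly:
$$\Poin^R_\bbk(z, w) = \Poin^{\grmR}_\bbk(z, w).$$

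For the second step, Koszulness of $\grmR$ concentrates $\Tor^i$ in internal degree $i$ with respect to the standard $\bbN$-grading (tracked by $z$), while the weighted $w$-grading ranges freely. The usual Euler-characteristic argument on the minimal bigraded free resolution of $\bbk$ then yields the bigraded Fröberg identity
$$\Poin^{\grmR}_\bbk(z, w) \cdot HS_{\grmR}(-z, w) = 1.$$
Substituting the Hilbert series from the preceding theorem, evaluating at $-z$, and inverting gives the claimed formula. Setting $w = 1$ in the result produces numerator $(1+z)^3$ and denominator $1 - (d+e-3)z - (d+e-2)z^2$, which factors as $(1+z)(1-(d+e-2)z)$; one factor of $1+z$ then cancels, collapsing the expression to $(1+z)^2/(1-(d+e-2)z)$.

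The main obstacle is the bigraded transfer of Poincaré series from $\grmR$ to $R$. While the ungraded equality follows from standard spectral sequence arguments under Koszulness, tracking the weighted grading through the degeneration requires care; this could alternatively be handled by appealing to an appropriate bigraded version of the framework in \cite[Theorem~5.2]{hrwTKoszulPropIAffSemigroupRings}, where this identification is often built into the characterization of nonstandard Koszulness. Once the transfer is in hand, the remainder is routine manipulation of rational generating functions.
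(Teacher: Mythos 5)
Your proposal is correct and follows the same route as the paper: the paper's proof is a one-line appeal to \cite[Corollary~5.6]{hrwTKoszulPropIAffSemigroupRings}, which states precisely that nonstandard Koszulness gives $\Poin^R_\bbk(z,w) = 1/HS_{\grmR}(-z,w)$, thereby packaging both of your steps --- the spectral-sequence transfer of Poincar\'e series from $\grmR$ to $R$ and the bigraded Fr\"oberg identity for the Koszul ring $\grmR$ --- into a single citation. The bigraded transfer you flag as the main obstacle is exactly what that corollary supplies, so your instinct to lean on the Herzog--Reiner--Welker framework is the right one, and the closing arithmetic (factoring $1-(d+e-3)z-(d+e-2)z^2 = (1+z)(1-(d+e-2)z)$ and cancelling one $(1+z)$) checks out.
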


\begin{proof} 
By \cite[Corollary~5.6]{hrwTKoszulPropIAffSemigroupRings}, the graded Poincar\'e series of $R$ is equal to $\frac{1}{HS_{\grmR}(-z,w)}$ and the ungraded Poincar\'e series of $R$ is equal to $\frac{1}{HS_{\grmR}(-z)}$, manipulations which easily yield the formulas above. 

It is worth noting that by examining the coefficient of $z^i$ in the ungraded Poincar\'e series, we find that the ungraded Betti numbers are 
\[
\beta_i^R(\bbk) = \begin{cases}
1 &\text{ if } i=0 \\
d+e &\text{ if } i=1 \\
(d+e-1)^2(d+e-2)^{i-2} &\text{ if } i \geq 2
\end{cases}
\]
and therefore depend only on the quantity $d+e$, rather than on the values of $d$ and $e$ individually.  
\end{proof}

With the graded Poincar\'e series in hand, we can make the following observation about the Betti table of $\bbk$ as an $R$-module by examining the coefficients of $w^iz^i$ and $w^iz^{ei}$ in $\Poin_\bbk^R(z,w)$.

\begin{corollary}
\label{cor:bettitable}
The resolution of $\bbk$ as an $R$-module has the following entries as part of its Betti table: 
\[
\beta_{i,i}^R(\bbk) = \begin{cases}
1 & \text{ if } i=0\\
d & \text{ if } i=1\\
(d-2)^{i-2}(d-1)^2 & \text{ if } i \geq 2
\end{cases}
\: \text{ and } \: 
\beta_{i,ei}^R(\bbk) = \begin{cases}
1 & \text{ if } i=0\\
e(e-1)^{i-1} & \text{ if } i\geq 1
\end{cases}.
\]
In particular, the resolution of $\bbk$ over $R$ has both an infinite ``true linear'' strand if $d > 2$ and a ``maximally twisted'' strand for all $e>1$, and the Betti numbers in these extremal strands depend only on $d$ and $e$ respectively. 
\end{corollary}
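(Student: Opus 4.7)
The plan is to extract $\beta_{i,i}^R(\bbk)$ and $\beta_{i,ei}^R(\bbk)$ as specific bivariate coefficients of the rational function $\Poin^R_\bbk(z,w)$ from Corollary~\ref{cor:gradedpoincare}.

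First I would observe that every monomial appearing in the numerator of $\Poin^R_\bbk(z,w)$, and in the subtracted part of its denominator, has ``$w$-excess'' (that is, $w$-degree minus $z$-degree) equal to either $0$ or $e-1$. To decouple these two strata I would apply the substitution $w \mapsto w\lambda,\ z \mapsto z/\lambda$; under this change of variables, $\lambda$ enters only through the combination $\mu := \lambda^{e-1}$, and the Poincar\'e series rewrites as $(N_0 + N_1\mu)/(D_0 - D_1\mu)$ with
\[
N_0 = (1+wz)^2,\quad D_0 = 1-(d-2)wz,\quad N_1 = w^ez(1+wz)^2,\quad D_1 = w^ez\bigl[(e-1)+(d+e-2)wz\bigr].
\]
The coefficient of $\mu^r$ collects precisely the monomials of $w$-excess $r(e-1)$, so the true linear strand $\{\beta_{i,i}\}$ lives in the $\mu^0$ piece and the maximally twisted strand $\{\beta_{i,ei}\}$ lives in the $\mu^i$ piece.

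For the true linear strand the $\mu^0$ coefficient is simply $N_0/D_0 = (1+u)^2/(1-(d-2)u)$ in $u = wz$, and reading off the coefficient of $u^i$ in the expansion $(1+u)^2 \sum_{k\geq 0}(d-2)^k u^k$ yields the stated formula via a one-line binomial combination.

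For the maximally twisted strand I would expand $(D_0-D_1\mu)^{-1}$ as a geometric series in $\mu$, combine with $N_0+N_1\mu$, and obtain
\[
[\mu^i]\;=\;\frac{D_1^{i-1}\,(N_0 D_1 + N_1 D_0)}{D_0^{i+1}} \qquad (i \geq 1).
\]
The main (and essentially only) obstacle is the identity $N_0 D_1 + N_1 D_0 = e\, w^e z\, (1+wz)^3$, which is what forces $e$ into the final answer; it rests on the cancellation $\bigl((e-1)+1\bigr) + \bigl((d+e-2)-(d-2)\bigr)wz = e(1+wz)$ after factoring out $(1+wz)^2 w^e z$. Granted this identity, the $\mu^i$ coefficient factors as $w^{ei}z^i$ times a function of $u=wz$ alone, so extracting $[w^{ei}z^i]$ reduces to evaluating the residual factor at $u=0$, giving $e(e-1)^{i-1}$.

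The ``In particular'' sentence follows immediately: each formula involves only $d$ or only $e$, and the respective strands are nonzero in all homological degrees provided $d>2$ or $e>1$.
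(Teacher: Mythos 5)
Your proof is correct and follows the same route the paper indicates, namely reading off the coefficients of $w^iz^i$ and $w^{ei}z^i$ in $\Poin_\bbk^R(z,w)$ from Corollary~\ref{cor:gradedpoincare}; you simply supply the coefficient extraction that the paper compresses into a single sentence. The stratification by $w$-excess (equivalently, by $\mu$) cleanly separates the two strands, and the identity $N_0D_1+N_1D_0 = e\,w^ez(1+wz)^3$ is exactly the algebraic simplification that produces $e(e-1)^{i-1}$; note only that this stratification genuinely separates the strands when $e\geq 2$, which is the regime the corollary's ``In particular'' clause already presumes.
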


\begin{figure}
\centering
\begin{subfigure}{.33\textwidth}
  \centering

\[\begin{matrix}
 & 0 & 1 & 2 & 3 & 4\\
0: & \mathbf 1 & 2 & 1 & . & .\\
1: & . & \mathbf 2 & 6 & 6 & 2\\
2: & . & . & \mathbf 2 & 10 & 18\\
3: & . & . & . & \mathbf 2 & 14\\
4: & . & . & . & . & \mathbf 2
\end{matrix}\]

\caption{$d=2,e=2$}
\end{subfigure}%
\begin{subfigure}{.33\textwidth}
  \centering

\[
\begin{matrix}
 & 0 & 1 & 2 & 3 & 4\\
0: & \mathbf 1 & 3 & 4 & 4 & 4\\
1: & . & \mathbf 2 & 10 & 24 & 40\\
2: & . & . & \mathbf 2 & 18 & 72\\
3: & . & . & . & \mathbf 2 & 26\\
4: & . & . & . & . & \mathbf 2
\end{matrix}
\]

\caption{$d=3,e=2$}
\end{subfigure}%
\begin{subfigure}{.33\textwidth}
  \centering

\[
\begin{matrix}
 & 0 & 1 & 2 & 3 & 4\\
0: & \mathbf 1 & 3 & 4 & 4 & 4\\
1: & . & . & . & . & .\\
2: & . & \mathbf 3 & 15 & 36 & 60\\
3: & . & . & . & . & .\\
4: & . & . & \mathbf 6 & 48 & 180\\
\end{matrix}
\]

\caption{$d=3,e=3$}
\end{subfigure}%
\caption{Betti tables for $\bbk$ over $R$ for three choices of $d$ and $e$. Entries in the ``maximally twisted'' strands are in bold.}
\label{fig:kBettiNumbers}
\end{figure}

\begin{example}
  We compare the cases $d=2$,  $e=2$; $d=3$, $e=2$; and $d=3$, $e=3$.  The Betti tables for $\bbk$ over $R$ in each of these three cases are shown in Figure \ref{fig:kBettiNumbers}.  Note that when $d=e=2$, the resolution does not have a ``true linear'' strand.  However, in both of the other cases, it does.  This strand depends only on $d$ and so is the same for the two cases $d=3$, $e=2$ and $d=e=3$.  All three resolutions have a ``maximally twisted'' strand.  This strand  depends only on $e$, and so the last case differs from the first two.  
\end{example}

The resolution of $\bbk$ over $R$ can be obtained by leveraging the close relation between $R$ and the rational normal scroll $\calS(d-1,e)$. 

\begin{proposition}
If $\bfF$ is the minimal free resolution of $\bbk[v]$ over the coordinate ring $S'$ of the scroll $\calS(d-1,e)$, then $\bfF \otimes_{S'} R$ gives a resolution of $\bbk$ over $R$. Similarly, $\bfF \otimes_{S'} \grmR$ gives a resolution of $\bbk$ over $\grmR$.
\end{proposition}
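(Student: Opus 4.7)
The plan is to leverage the isomorphisms $R \cong S'/(v-x_{d-1}^e)$ and $\grmR \cong S'/(v)$ established in the proofs of Proposition~\ref{prop:hilbseriesI2} and of the theorem identifying $\grmR$ with $S/I_0$. Under these identifications, both $R$ and $\grmR$ are obtained from $S'$ by killing a single element. Since $S'$ is an integral domain by \cite[Theorem~6.4]{eGeomOSyz}, each of $v-x_{d-1}^e$ and $v$ is a nonzerodivisor on $S'$, yielding length-one free resolutions
\[
0 \to S' \xrightarrow{\cdot(v-x_{d-1}^e)} S' \to R \to 0 \quad \text{and} \quad 0 \to S' \xrightarrow{\cdot v} S' \to \grmR \to 0,
\]
up to appropriate grading shifts.

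Next, I would compute $\Tor_i^{S'}(\bbk[v],R)$ and $\Tor_i^{S'}(\bbk[v],\grmR)$ by tensoring these short exact sequences with $\bbk[v]$. Here $\bbk[v]$ is viewed as an $S'$-module via the surjection $S' \twoheadrightarrow \bbk[v]$ sending each $x_i$ and $y_i$ to $0$ and $v$ to $v$; one checks that all $2 \times 2$ minors of $M_v$ vanish under this map, so it is well-defined. Since $x_{d-1} \mapsto 0$ in $\bbk[v]$, the element $v-x_{d-1}^e$ acts on $\bbk[v]$ as multiplication by $v$, which is a nonzerodivisor; similarly, $v$ itself is a nonzerodivisor on $\bbk[v]$. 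Hence the tensored sequences remain exact, giving $\Tor_i^{S'}(\bbk[v],R) = 0$ and $\Tor_i^{S'}(\bbk[v],\grmR) = 0$ for all $i \geq 1$.

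Because $\bfF$ is a free resolution of $\bbk[v]$ over $S'$, the homology of $\bfF \otimes_{S'} R$ computes $\Tor^{S'}_\ast(\bbk[v], R)$, and so the complex $\bfF \otimes_{S'} R$ is exact in positive degrees; likewise for $\bfF \otimes_{S'} \grmR$. For the $H_0$ identification, I would directly compute
\[
\bbk[v] \otimes_{S'} R \;\cong\; S'/(x_0, \ldots, x_{d-1}, y_0, \ldots, y_{e-1},\, v-x_{d-1}^e) \;\cong\; \bbk,
\]
and analogously $\bbk[v] \otimes_{S'} \grmR \cong \bbk$, since in each case the extra relation forces $v$ to zero as well. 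Tensoring a complex of free $S'$-modules with $R$ (resp.\ $\grmR$) yields a complex of free $R$-modules (resp.\ free $\grmR$-modules), so this produces the claimed resolutions. The only non-formal point is the observation that $v - x_{d-1}^e$ acts on $\bbk[v]$ as the nonzerodivisor $v$; everything else is bookkeeping with the change-of-rings spectral sequence degenerating trivially in this low-projective-dimension setting.
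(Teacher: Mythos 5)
Your proposal is correct and takes essentially the same approach as the paper: both exploit the isomorphism $R \cong S'/(v-x_{d-1}^e)$ (and $\grmR \cong S'/(v)$) together with the balancedness of $\Tor$, computing $\Tor_i^{S'}(\bbk[v], R)$ via the two-term free resolution of $R$ over $S'$ and observing that $v - x_{d-1}^e$ acts on $\bbk[v]$ as multiplication by the nonzerodivisor $v$ since $x_{d-1} \mapsto 0$. The paper phrases this by tensoring $\bbk[v]$ against the resolution $\bfG$ of $S'/(v-x_{d-1}^e)$ rather than via the long exact sequence, but these are the same calculation; your aside invoking a change-of-rings spectral sequence is unnecessary given how directly the $\Tor$ vanishing falls out.
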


\begin{proof}
As observed in the proof of Proposition~\ref{prop:hilbseriesI2}, $R$ can be viewed as the quotient $S[v]/(I_v + \ideal{v-x_{d-1}^e})$, where $I_v$ is the ideal of $2\times2$ minors of 
\[
M_v = \begin{bmatrix}
x_0 & x_1 & \ldots & x_{d-2} & v & y_0 & \ldots & y_{e-2}\\
x_1 & x_2 & \ldots & x_{d-1} & y_0 & y_1 & \ldots & y_{e-1}
\end{bmatrix},
\]
while $S' \coloneqq S[v]/I_v$ is simply the coordinate ring of the scroll $\calS(d-1,e)$. Note that with this notation, $R \cong S'/\ideal{v-x_{d-1}^e}$. We will also use $\bfx$ and $\bfy$ to denote the sets of variables $x_0, \ldots, x_{d-1}$ and $y_0, \ldots, y_{e-1}$ respectively. 

Let $\bfF$ be the minimal free resolution of $S'/\ideal{\bfx, \bfy}$ over $S'$, which can be obtained as a variant of the resolution of $S'/\ideal{\bfx, v, \bfy}$ over $S'$ presented in \cite[Section~4]{msTowardFreeResOScrolls}. More precisely, one can construct the resolutions of $\ideal{\bfx}$ and $\ideal{\bfy}$ using the same blocks shown in \cite{msTowardFreeResOScrolls}, then zip them together with a mapping cone for a resolution of $\ideal{\bfx, \bfy}$, where the chain maps follow the same pattern with $y_0$ playing the role of $v$. We will show that $\bfF \otimes_{S'} S'/\ideal{v-x_{d-1}^e}$ is also exact, and thus resolves $R/\ideal{\bfx, \bfy}$ as desired. 

To this end, we check $\Tor_i^{S'}(S'/\ideal{\bfx, \bfy}, S'/\ideal{v-x_{d-1}^e})$. Because $\Tor$ is balanced, this can be computed from $S'/\ideal{\bfx, \bfy} \otimes_{S'} \bfG$, where $\bfG: 0 \leftarrow S' \xleftarrow{\cdot v-x_{d-1}^e} S' \leftarrow 0$ is the resolution of $S'/\ideal{v-x_{d-1}^e}$ over $S'$. Because $S'/\ideal{\bfx, \bfy} \cong \bbk[v]$, $S'/\ideal{\bfx, \bfy} \otimes_{S'} \bfG$ is simply $0 \leftarrow \bbk[v] \xleftarrow{v} \bbk[v] \leftarrow 0$, which only has homology at the $0$th spot, since multiplication by $v$ is injective. Therefore, $\bfF \otimes_{S'} S'/\ideal{v-x_{d-1}^e}$ is exact except at the zeroth spot, where its homology is $\bbk$, as desired. 

The same argument, using $\grmR \cong S[v]/(I_v + \ideal{v})$, gives the second statement.
\end{proof}

An immediate consequence of the Poincar\'e series in Corollary~\ref{cor:gradedpoincare} is the fact that $R$ is Golod. 

\begin{corollary}
\label{cor:Golod}
The ring $R$ is Golod.
\end{corollary}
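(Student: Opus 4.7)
The plan is to verify directly that the ungraded Poincar\'e series from Corollary~\ref{cor:gradedpoincare} achieves the Serre upper bound, which is the definition of Golodness. Recall that for $R = S/I$ with $S$ a polynomial ring in $n = d+e$ variables, Serre's inequality reads
$$\Poin_\bbk^R(z) \leq \frac{(1+z)^{d+e}}{1 - z\bigl(\Poin_S^R(z)-1\bigr)},$$
and $R$ is Golod exactly when equality holds.

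The first step will be to extract the total ungraded Betti numbers of $R$ over $S$ from the Eagon--Northcott resolution of Corollary~\ref{cor:eagonnorthcott}. Summing the graded Betti numbers $k\binom{d-1}{k+1-i}\binom{e}{i}$ over $i$ via the Vandermonde identity yields $\beta_k^S(R) = k\binom{d+e-1}{k+1}$ for $1 \leq k \leq d+e-2$, hence
$$\Poin_S^R(z) \;=\; 1 + \sum_{k=1}^{d+e-2} k\binom{d+e-1}{k+1} z^k.$$

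Substituting this expression and the formula $\Poin_\bbk^R(z) = (1+z)^2/(1-(d+e-2)z)$ into Serre's bound, Golodness reduces to the polynomial identity
$$1 - \sum_{k=1}^{d+e-2} k\binom{d+e-1}{k+1}\, z^{k+1} \;=\; (1+z)^{d+e-2}\bigl(1-(d+e-2)z\bigr).$$
The main (and essentially only) obstacle is proving this identity. I would do so by comparing coefficients of $z^j$ on both sides: after expanding the right-hand side and applying Pascal's identity $\binom{d+e-1}{j} = \binom{d+e-2}{j} + \binom{d+e-2}{j-1}$, the equality reduces for each interior $j$ to the elementary absorption identity $j\binom{m}{j} = (m-j+1)\binom{m}{j-1}$ with $m = d+e-2$, while the boundary coefficients $j = 0, 1, d+e-1$ check directly. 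Once the identity is established, Serre's bound is attained and $R$ is Golod.
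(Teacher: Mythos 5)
Your proposal is correct and matches the paper's argument in substance: both verify that $\Poin_\bbk^R(z)$ attains the Serre/Golod bound $(1+z)^{d+e}/(1 - z^2\Poin_I^S(z))$, coarsen the Eagon--Northcott Betti numbers via Vandermonde to get $\beta_k^S(R) = k\binom{d+e-1}{k+1}$, and reduce Golodness to the identity $1 - \sum_k k\binom{d+e-1}{k+1}z^{k+1} = (1+z)^{d+e-2}(1-(d+e-2)z)$. The only difference is cosmetic: the paper proves that identity by a chain of index shifts and sum manipulations, while you propose a direct coefficient comparison using Pascal and absorption; both are routine and equivalent.
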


\begin{proof}
With the computations already done, it is straightforward to check the defining inequality for Golodness: 
\[
\Poin_\bbk^R(z) = 
\frac{(1+z)^{d+e}}{1-z^2 \Poin^S_I(z)}.
\]

The Betti numbers for the resolution of $I$ over $S$ are given by a coarsened (shift of) the Eagon--Northcott count from Corollary~\ref{cor:eagonnorthcott}: $\beta_i^S(I) = (i+1) \binom{d+e-1}{i+2}$ for $i=0,\ldots, d+e-3$. We can thus compute:
\[
\begin{array}{ll}
1-z^2 \Poin^S_I(z) &= 1 - z^2 \left( \sum\limits_{i=0}^{d+e-3} (i+1) \binom{d+e-1}{i+2} z^i \right) \\
&= 1 - \sum\limits_{i=2}^{d+e-1} (i-1) \binom{d+e-1}{i} z^i \\ 
&= 1 - \sum\limits_{i=2}^{d+e-1}i \binom{d+e-1}{i} z^i + \sum\limits_{i=2}^{d+e-1} \binom{d+e-1}{i} z^i \\ 
&= 1 - \sum\limits_{i=2}^{d+e-1} (d+e-1) \binom{d+e-2}{i-1} z^i + \left[(1+z)^{d+e-1} - 1 - (d+e-1)z\right] \\ 
&= -(d+e-1)z \sum\limits_{i=1}^{d+e-2} \binom{d+e-2}{i} z^i + (1+z)^{d+e-1} - (d+e-1)z \\ 
&= -(d+e-1)z[(1+z)^{d+e-2}-1] + (1+z)^{d+e-1} - (d+e-1)z \\
&= (1+z)^{d+e-2}[1-(d+e-2)z]
\end{array}
\]
The equality follows immediately.
\end{proof}

\bibliographystyle{alpha}
\bibliography{bibliography}

\end{document}